\newtheorem{theorem}{Theorem}
\newtheorem{definition}[theorem]{Definition}
\newtheorem{lemma}[theorem]{Lemma}
\newtheorem{proposition}[theorem]{Proposition}
\newtheorem{remark}[theorem]{Remark}
\newenvironment{proof}[1][Proof]{\textbf{#1.} }{\ \rule{0.5em}{0.5em}}
\begin{document}

\title{Normal Factorization in $SL(2,\mathbb{Z})$ and the Confluence of Singular Fibers in Elliptic Fibrations}
\author{Carlos A. Cadavid, Juan D. V\'elez}
\maketitle

\begin{abstract}
In this article we obtain a result about the uniqueness of factorization in terms of conjugates of the matrix $U=\left[ \begin{array}{cc} 1 & 1 \\ 0 & 1    \end{array} \right]$, of some matrices representing the conjugacy classes of those elements of $SL(2,\mathbb{Z})$ arising as the monodromy around a singular fiber in an elliptic fibration (i.e. those matrices that appear in Kodaira's list). Namely we prove that if $M$ is a matrix in Kodaira's list, and $M=G_1\ldots G_r$ where each $G_i$ is a conjugate of $U$ in $SL(2,\mathbb{Z})$, then after applying a finite sequence of Hurwitz moves the product $G_1\ldots G_r$ can be transformed into another product of the form $H_1\ldots H_nG_{n+1}'\ldots G_r'$ where $H_1\ldots H_n$ is some fixed shortest factorization of $M$ in terms of conjugates of $U$, and $G_{n+1}'\ldots G_r'=Id_{2\times 2}$.
We use this result to obtain necessary and sufficient conditions under which a relatively minimal
elliptic fibration without multiple fibers $\phi:S\rightarrow D=\{z \in \mathbb{C}:\left|z\right|<1 \}$, admits a weak deformation into another such fibration having only one singular fiber. 
\end{abstract}

\section{Introduction}

The purpose of this article is twofold. On the one hand we begin the study of the extent to which a given element of the
mapping class group of an oriented torus (i.e. $SL(2,\mathbb{Z})$) factors uniquely as a product of right handed Dehn twists, i.e. conjugates of the matrix $$U=\left[ \begin{array}{cc} 1 & 1 \\ 0 & 1    \end{array} \right].$$
Our first main result (Theorem \ref{Main}) addresses this question, and gives an affirmative answer for those elements in $SL(2,\mathbb
{Z})$ which arise as the monodromy around a singular fiber in an elliptic fibration. As far as we know this subject has two predecesors. The first one is a well known result due to R. Livne and Moishezon
\cite{Moishezon}, which says that any factorization of the identity matrix in $SL(2,\mathbb{Z})$ in terms of $r$ conjugates of $U$ can be transformed by applying a finite sequence of Hurwitz moves, into a standard factorization $(VU)^{6s}$ where
$s\geq 0$, $r=12s$, and $$V=\left[ \begin{array}{rr} 1&0  \\-1 & 1 \end{array}  \right].$$
The second one arose in the study of branched covers of $2-$manifolds, and was initiated by Hurwitz, Clebsch and Luroth, and
more recently continued by several other authors (see \cite{KhovanskiiZdravkovska} and the references therein). These authors
study the analogous problem when one replaces $SL(2,\mathbb{Z})$ by the symmetric group $S_n$, and right
handed Dehn twists by transpositions.  For instance, Natanzon's result (see \cite{Natanzon}) claims that if $\sigma\in S_n$, and $\sigma=
\tau_1\ldots \tau_k = \tau_1'\ldots \tau_k'$ are two factorizations in terms of transpositions, such that the subgroups
$\langle \tau_1,\ldots,\tau_k \rangle$ and $\langle \tau_1',\ldots,\tau_k' \rangle$ act transitively on the
$n$ symbols, then there exists a sequence of Hurwitz moves which transforms the product $\tau_1\ldots \tau_k $ into the product $\tau_1'\ldots \tau_k'$. In particular, this implies a result (which parallels Theorem \ref{Main}) saying that
if one picks a particular shortest transitive factorization  $\mu_1\ldots \mu_{s_{\sigma}}$ of $\sigma$ in terms of 
transpositions (i.e. such that $\langle \mu_1,\ldots, \mu_{s_{\sigma}} \rangle$ acts transitively on the $n$ symbols), then any transitive factorization $\tau_1\ldots \tau_k$ of $\sigma$ in terms of transpositions, transforms after a finite sequence of Hurwitz moves into a factorization of the form $\sigma=\mu_1\ldots \mu_{s_{\sigma}}\tau_{s_{\sigma}+1}'\ldots \tau_k'$. The proof of Theorem \ref{Main} is based on the careful study of the description of $PSL(2,\mathbb{Z})$ as the direct product $\mathbb{Z}_2  \ast \mathbb{Z}_3$ developed by R. Livne (see \cite{Moishezon}).

On the other hand, we study the problem of when an elliptic fibration over a disk can be deformed into another elliptic fibration over a disk having only one singular fiber. Our result in this direction (Theorem  \ref{coalescencia2}) provides necessary and sufficient conditions under which a given relatively minimal elliptic fibration over a disk without multiple fibers can be \emph{weakly} deformed into another such fibration having only one singular fiber (Definition \ref{weakdeformation}). Weak deformation allows the passing from one deformation family to another whenever there exists a member of each family being topologically equivalent with each other. 
This result is obtained as an application of our normal factorization result. 

This type of problem was posed by Naruki in \cite{Naruki}. In \cite{Naruki} that author considers the confluence of three singular fibers $F_1$, $F_2$ and $F_3$, of types $I_a$, $I_b$ and $I_c$ in an elliptic fibration into one singular fiber $F$ after a deformation, and studies in depth the necessary condition for the existence of the confluence that if $M_1$, $M_2$ and $M_3$ are the monodromies around $F_1$, $F_2$ and $F_3$, and $M$ is the monodromy
around the singular fiber $F$ they coalesce to, then $M=M_1M_2M_3$ and $\chi(F)=\chi(F_1)+\chi(F_2)+\chi(F_3)$. He solves the algebraic problem of classifying up to the braid group action, those triples $(M_1,M_2,M_3)$ such that
each $M_i$ is a conjugate of $U^{a_i}$, $M=M_1M_2M_3$ is a matrix in Kodaira's list , and $a_1+a_2+a_3=\chi(F_M)$  (see Table \ref{primeratabla}).  The opposite problem, namely that of finding necessary and sufficient conditions under which
a singular fiber in a fibration of arbitrary fiber genus admits a deformation which splits it into several ones has been intensely studied (see for example \cite{Takamura} and the references therein). Moishezon completely solved this problem in the elliptic case (cf. Theorem \ref{morsificacion}).

\section{Basic definitions and facts}
\emph{Througout this article, $S$ will denote a complex manifold with complex dimension 2 and $D$ will denote the open unit disk $\{z\in\mathbb{C}:|z|<1\}$.}
\begin{definition}
By an \emph{elliptic fibration} we will mean a triple $(\phi,S,D)$ where $\phi:S \rightarrow D$ is a proper surjective holomorphic map with a finite number (possibly zero) of critical values $q_1,\ldots,q_k \in D$, such that the preimage of each regular value is a (compact) connected Riemann surface of genus $1$.
\end{definition}

We will say that an elliptic fibration is \emph{singular} if it has at least one singular fiber. A singular fiber $\phi^{-1}(q_i)$ is said to be of \emph{Lefschetz type} if $$C_i:=\{p\in\phi^{-1}(q_i):p \text{ is a critical point of } \phi\}$$
is finite, and for each  $p\in C_i$ there exist holomorphic charts around $p$ and $q_i$ relative to which $\phi$ takes the form $(z_1,z_2)\rightarrow z_1^2+z_2^2$. If a fiber of Lefschetz type contains exactly one critical point, it will be said to be \emph{simple}. Every fiber $\phi^{-1}(q)$ of an elliptic fibration can be regarded as an effective divisor $w_{1,q}X_{1,q}+\ldots +w_{r_q,q}X_{r_q,q}$. A (necessarily) singular fiber $\phi^{-1}(q)$ is called a \emph{multiple fiber} if $gcd(w_{1,q},\ldots,w_{r_{q},q})>1$, and it is said to be of \emph{smooth multiple type} if it is of the form $w_{1,q}X_{1,q}$ with $w_{1,q}>1$ and $X_{1,q}$ is a smooth submanifold of $S$.
An elliptic fibration is said to be \emph{relatively minimal} if no fiber contains an embedded sphere with selfintersection $-1$. \emph{All elliptic fibrations in this article will be assumed to be relatively minimal}.\\
The Euler characteristic of the domain of an elliptic fibration can be calculated using the following formula which is analogous to the Riemann-Hurwitz formula
$$\chi(S)=\sum_{i=1}^k \chi(\phi^{-1}(q_i)) $$

Next we define when two elliptic fibrations will be regarded as being (topologically) the same.
\begin{definition}
Two elliptic fibrations $(\phi_1,S_1,D)$ and $(\phi_2,S_2,D)$ are said to be \emph{topologically equivalent} if there exist orientation preserving diffeomorphisms $h:S_1 \rightarrow S_2$ and $h':D \rightarrow D$ such that $\phi_2 \circ h=h' \circ \phi_1$. In this case we write $(\phi_1,S_1,D)\sim(\phi_2,S_2,D)$ or simply $\phi_1\sim\phi_2$.
\end{definition}

\begin{definition}
By a \emph{family of elliptic fibrations} we will mean a triple $(\Phi,\mathcal{S},D\times D_{\epsilon})$ where 
$\mathcal{S}$ is a three-dimensional complex manifold, $D_{\epsilon}=\{z \in \mathbb{C}:|z|<\epsilon\}$ and $\Phi:\mathcal{S}\rightarrow D\times D_{\epsilon}$ is a surjective proper holomorphic map, such that
\begin{enumerate}
\item if for each $t\in D_{\epsilon}$, $D_t:=D \times \{t\}$, $\mathcal{S}_t:=\Phi^{-1}(D_t)$ 
and $\Phi_t:=\Phi|_{\mathcal{S}_t}:\mathcal{S}_t \rightarrow D_t$, then each $(\Phi_t,\mathcal{S}_t,D_t)$ is an elliptic
fibration;
\item the composition 
$\mathcal{S} \stackrel{\Phi}{\rightarrow} D \times D_{\epsilon} 
\stackrel{pr_2}{\rightarrow} D_{\epsilon}$ does not have critical points.
\end{enumerate}
A family of elliptic fibrations $(\Phi,\mathcal{S},D\times D_{\epsilon})$ is said to be a \emph{deformation} of a given elliptic fibration $(\phi,S,D)$, if $(\phi,S,D)$ is biholomorphically equivalent to $(\Phi_{0},\mathcal{S}_{0},D_{0})$, i.e. there exist biholomorphic maps $h:S\rightarrow \mathcal{S}_{0}$ and $h':D\rightarrow D$ such that $\Phi_{0}\circ h=h'\circ \phi$ .
\end{definition}

\begin{remark}\label{nota1}
It can be seen that if $(\Phi,\mathcal{S},D\times D_{\epsilon})$ is a family of elliptic fibrations, the (oriented) diffeomorphism type of $\mathcal{S}_t$ is independent of $t\in D_{\epsilon}$. In particular, $\chi(\mathcal{S}_t)$ is also independent of $t\in D_{\epsilon}$.
\end{remark}

\begin{definition}
Let $(\phi,S,D)$ be an elliptic fibration.  A deformation $(\Phi,\mathcal{S},D\times D_{\epsilon})$ of $(\phi,S,D)$ will be said to be a \emph{morsification of} $(\phi,S,D)$, if for each $t\neq 0$, each singular fiber of 
$\Phi_t:\mathcal{S}_t \rightarrow D_t$ is either of simple Lefschetz type or of smooth multiple type.
\end{definition}

The following fundamental result is due to Moishezon (see \cite{Moishezon}).

\begin{theorem}\label{morsificacion}
Every elliptic fibration admits a morsification. Moreover, if the elliptic fibration
does not have multiple fibers, then it admits a morsification such that none of its members 
contains a multiple fiber.
\end{theorem}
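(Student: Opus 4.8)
The plan is to reduce this global statement to a local one near each singular fiber, solve the local problem by perturbing Weierstrass models, and then patch. Since an elliptic fibration $\phi:S\rightarrow D$ has only finitely many critical values $q_1,\dots,q_k$, I would choose pairwise disjoint closed subdisks $\Delta_i\ni q_i$ such that $\phi$ restricts to a locally trivial torus bundle over $D\setminus\bigcup_i\mathrm{int}\,\Delta_i$. It then suffices to produce, for each $i$, a one-parameter family $(\Phi^{(i)},\mathcal S^{(i)},\Delta_i\times D_{\epsilon_i})$ deforming $\phi|_{\phi^{-1}(\Delta_i)}$, holomorphically trivial near $\partial\Delta_i$ (so it agrees there with the unperturbed fibration), and such that for $t\neq 0$ all fibers of $\Phi^{(i)}_t$ are of simple Lefschetz type, except possibly one fiber of smooth multiple type when $\phi^{-1}(q_i)$ is a multiple fiber. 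Gluing these local families to the product family over $D\setminus\bigcup_i\mathrm{int}\,\Delta_i$ yields the morsification. The only bookkeeping needed is that nothing is lost in the count of singular fibers, and this is supplied by the formula $\chi(S)=\sum_i\chi(\phi^{-1}(q_i))$ together with the deformation invariance of $\chi$ in Remark \ref{nota1} (each Kodaira fiber over $q$ satisfies $\chi=\mathrm{ord}_q(\delta)$ for the discriminant $\delta$, so after the local perturbations the zeros of $\delta$ become exactly $\sum_i\chi(\phi^{-1}(q_i))$ simple zeros, i.e.\ that many $I_1$ fibers).

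For the local problem, I would first treat a singular fiber $F=\phi^{-1}(q_i)$ that is \emph{not} a multiple fiber. Over $\Delta_i$ such a fibration is the minimal resolution of a Weierstrass fibration $y^2=x^3+a(z)x+b(z)$, with $a,b$ holomorphic on $\Delta_i$, the Kodaira type of $F$ being determined by the vanishing orders at $q_i$ of $a$, $b$ and of $\delta(z)=4a(z)^3+27b(z)^2$. Now perturb: replace $a(z),b(z)$ by holomorphic $a(z,s),b(z,s)$ with $a(\cdot,0)=a$, $b(\cdot,0)=b$, agreeing with $a,b$ near $\partial\Delta_i$, and chosen so that for $s\neq 0$ the discriminant $\delta(z,s)$ has only simple zeros in $\Delta_i$, at none of which $a$ or $b$ vanishes. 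A generic such perturbation works because the "bad" set is a proper analytic subset of the finite-dimensional space of jet-level perturbations. Taking the minimal resolution of the resulting Weierstrass family gives a deformation of $\phi|_{\phi^{-1}(\Delta_i)}$ whose singular fibers for $s\neq 0$ are all of type $I_1$, i.e.\ simple Lefschetz; properness and the genus-one condition are inherited from the Weierstrass family, relative minimality holds after resolution, and no multiple fibers are created. (Equivalently, and in the spirit of Theorem \ref{Main}, this realizes geometrically the factorization of the monodromy $M_i$ into $\chi(F)$ conjugates of $U$.)

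To handle a multiple fiber I would invoke Kodaira's logarithmic transformation: the only multiple fibers in Kodaira's list are of type ${}_mI_b$ with $m\geq 2$, and $\phi$ is obtained from an elliptic fibration $\phi'$ \emph{without} multiple fibers by performing logarithmic transforms of multiplicities $m$ at finitely many points, each lying on a fiber of $\phi'$ of type $I_b$. I would morsify $\phi'$ as above, arranging moreover that each transformation point comes to lie on a \emph{regular} fiber of $\Phi'_t$ for all small $t$ (if it originally sat on an $I_b$ fiber with $b\geq 1$, first split that fiber into $b$ fibers $I_1$ and regular fibers by the perturbation, then choose a nearby point on a regular fiber), and then carry the logarithmic transforms along in the family. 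A logarithmic transform along a regular fiber produces a fiber of type ${}_mI_0$, which is of smooth multiple type. Hence the outcome is a morsification of $\phi$ whose singular fibers for $t\neq 0$ are simple Lefschetz fibers together with smooth multiple fibers, which creates no multiple fibers beyond those forced by the original ${}_mI_b$'s, yielding both assertions of the theorem.

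I expect the main obstacle to be the two foundational reductions, not the perturbation step, which is soft. First, one must justify carefully that over a disk a relatively minimal elliptic fibration without multiple fibers is indeed the minimal resolution of a (possibly singular) Weierstrass fibration — equivalently that it admits a holomorphic section — and that the minimal resolution of a one-parameter Weierstrass family is again a family of elliptic fibrations in the sense used here (flatness, properness, relative minimality of the resolution). Second, one must check that the logarithmic transformation is compatible with one-parameter deformations, i.e.\ that it can be performed fiberwise over $D_\epsilon$ so as to produce a bona fide family. Once these are in place, uniformity over all of Kodaira's list is automatic, since the genericity of the discriminant perturbation does not distinguish among the fiber types; this is essentially the content of Moishezon's argument in \cite{Moishezon}.
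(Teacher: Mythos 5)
The paper gives no proof of this theorem: it is quoted from Moishezon's lecture notes \cite{Moishezon}, so your proposal can only be judged against the standard argument. Your circle of ideas (Weierstrass models, generic perturbation of the discriminant so that it acquires only simple zeros, logarithmic transforms for the multiple fibers, Euler-characteristic bookkeeping via $\chi(F)=\mathrm{ord}_q(\delta)$) is the right one, and the genericity step itself is indeed soft.

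The genuine gap is the localization-and-patching architecture on which everything else rests. You ask for holomorphic perturbations $a(z,s),b(z,s)$ on $\Delta_i$ that agree with $a(z),b(z)$ near $\partial\Delta_i$ but differ inside; by the identity theorem a holomorphic function on the connected disk $\Delta_i$ that agrees with $a$ on an annulus equals $a$ everywhere, so the only such perturbation is the trivial one. Nor can the gluing be rescued by requiring only a biholomorphism (rather than equality) between the perturbed family and the product family over the boundary annulus: a nontrivial perturbation changes the functional invariant $j(z,s)=4a^3/(4a^3+27b^2)$ there, and two torus fibrations over an annulus with different $j$-functions are not biholomorphic over the base. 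The ``cut off near the boundary'' picture is a $C^\infty$ device; in the holomorphic category one must perturb globally. The repair is to observe that a relatively minimal elliptic fibration over $D$ without multiple fibers is globally Kodaira's basic member (the twisting group $H^1(D,\Omega)$ vanishes because $D$ is Stein and contractible), hence admits a single Weierstrass model over all of $D$, which one perturbs at once with no patching. Two further points you should not elide: the total space of the perturbed Weierstrass family is a singular threefold along the rational double points of the central fiber, and a simultaneous resolution restricting to the minimal resolution at $s=0$ exists in general only after a finite base change $s=t^k$ (Brieskorn--Tjurina), which is harmless but must be invoked; and properness over the open disk requires shrinking $D$ slightly so that no new zeros of the perturbed discriminant enter from the boundary. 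The logarithmic-transform step is the standard way to treat the multiple fibers, but ``carrying the logarithmic transforms along in the family'' is precisely the kind of fiberwise holomorphic construction that needs the same care as the gluing you skipped.
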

The following definition is introduced in order to state one of our main results.

\begin{definition}\label{weakdeformation}
Two elliptic fibrations 
$(\phi_1,S_1,D)$ and $(\phi_2,S_2,D)$ will be said to be \emph{weakly deformation equivalent} 
whenever there exist a finite collection of families of elliptic
fibrations $(\Phi^{1},\mathcal{S}
^{1}),\ldots,(\Phi^{k},\mathcal{S}^{k})$, and $s_i,t_i\in D_{\epsilon_i}$ 
for each $i=1,\ldots,k$, such that $\phi_1\sim\Phi _{s_1}^{1}$, 
$\Phi_{t_i}^{i}\sim\Phi_{s_{i+1}}^{i+1}$ for $i=1,\ldots,k-1$, 
and $\Phi_{t_k}^{k}\sim\phi_2.$
\end{definition}

We now turn to the combinatorial description of elliptic fibrations. Let $(\phi,S,D)$ be 
an elliptic fibration and let $q_1,\ldots,q_k$ be its critical values. Take $0<r<1$ such that the open disk
$D_r$ having center $0$ and radius $r$ contains the points $q_1,\ldots,q_k$. Let us fix a point $q_0
\in \partial D_r$. Notice that $q_0$ is a regular value.  Let us also fix an orientation preserving 
diffeomorphism $j$ between the genus 1 Riemann surface $\phi^{-1}(q_0)$ and the genus 1 Riemann surface 
$\mathbb{C}/\mathbb{Z}^2$. These choices uniquely determine an antihomomorphism 
$$\lambda_{r,q_0,j}:\pi_1(D-\{q_1,\ldots,q_k\},q_0)\rightarrow SL(2,\mathbb{Z})$$ where 
$SL(2,\mathbb{Z})$ is the group formed
by all $2\times 2$ integral matrices whose determinant is $1$. Such antihomomophism is said to be \emph{a re
presentation monodromy of} $(\phi,S,D)$.
\emph{In order to make the presentation more standard, we turn the monodromy representation into
a homomorphism by regarding $\pi_1(D-\{q_1,\ldots,q_k\},q_0)$ as the group whose
binary operation $\star$ is defined by $[\gamma_1]\star [\gamma_2]:=[\gamma_2].[\gamma_1]$, where 
 ``." denotes the usual composition of homotopy classes of paths}. 
The matrix $\lambda_{r,q_0,j}([C_r])$, where $C_r$ denotes the path $q_0 \exp(2\pi \sqrt{-1}t)$, $0\leq t \leq 1$, will be called \emph{the total monodromy of} $(\phi,S,D)$. 

\begin{remark}\label{nota2}
The conjugacy class of $\lambda_{r,q_0,j}([C_r])$ in $SL(2,\mathbb{Z})$ is independent of the choices $r$, $q_0$ and $j$, and that if $(\Phi,\mathcal{S},D\times D_{\epsilon})$ is a family of elliptic fibrations and $t_1,t_2\in D_{\epsilon}$ then the conjugacy classes of the total monodromies of $(\Phi_{t_1},\mathcal{S}_{t_1},D)$ and $(\Phi_{t_2},\mathcal{S}_{t_2},D)$ are the same.
\end{remark}

The group $\pi_1(D-\{q_1,\ldots,q_k\},q_0)$ 
is free and has rank $k$. We now describe a method for obtaining free bases for this group. The bases obtained by this
method will be called \emph{special bases}.
Pick closed disks $\overline D_1,\ldots,\overline D_k$ 
contained in $D_r$, 
centered at $q_1,\ldots,q_k$, respectively, and mutually disjoint. Pick simple paths 
$\beta_1,\ldots,\beta_k$ whose interiors are mutually disjoint and contained in 
$D_r-\cup \overline D_i$,
with $\beta_1(0)=\ldots=\beta_k(0)=q_0$ and $q_i^0:=\beta_i(1)\in \partial D_i$ for each $i=1,\ldots,k$, and
such that their initial velocity vectors $\beta_1'(0),\ldots,\beta_k'(0)$ are all nonzero and 
$0<\theta_1<\ldots<\theta_k<\pi$ where $\theta_i$ is the angle between the vectors
$\beta_i'(0)$ and $\sqrt{-1}q_0$. Let $\gamma_i$ be a path which starts at $q_0$, follows $\beta_i$ until
it reaches $q_i^0$, then traverses once and positively the circle $\partial D_i$, and finally 
comes back to $q_0$ following $\beta_i$ in the opposite direction. Then
$\{[\gamma_1],\ldots,[\gamma_k]\}$ is a basis for the free group $\pi_1(D-\{q_1,\ldots,q_k\},q_0)$. Notice that
$[C_r]=[\gamma_1]\ldots [\gamma_k]=[\gamma_k]\star\ldots \star[\gamma_1]$ and therefore the total monodromy  $\lambda([C_r])$ equals $\lambda([\gamma_k])\ldots \lambda([\gamma_1]) $.\\

The following proposition is standard. Its statement requires the concept of Hurwitz move which we define next.
\begin{definition}
Let $G$ be a group and let $g_1\ldots g_k$ be a product of elements of $G$. Another such product $g_1'\ldots g_k'$ is said to be \emph{obtained from $g_1\ldots g_k$ by applying a Hurwitz move} if for some
$1\leq i\leq k-1$, $g_j'=g_j$ for $j\notin \{i,i+1\}$, and either $g_i'=g_{i+1},\ g_{i+1}'=g_{i+1}^{-1}g_ig_{i+1}$ or $g_i'=g_ig_{i+1}g_i^{-1},\ g_{i+1}'=g_i$. We will also say that an ordered set $\{g_1',\ldots,g_k'\}$ is obtained from
another ordered set $\{g_1,\ldots,g_k\}$ \emph{by applying one Hurwitz move}, if the same relations hold between the $g_i'$'s
and the $g_i$'s.
\end{definition}

It is important to remark that the Hurwitz moves $$g_1\ldots g_ig_{i+1} g_k \rightarrow g_1\ldots g_{i+1}(g_{i+1}^{-1}g_ig_{i+1})\ldots g_k$$ and 
$$g_1\ldots g_ig_{i+1} g_k \rightarrow g_1\ldots (g_i g_{i+1}g_i^{-1})g_i\ldots g_k$$
are inverse of each other.
\begin{proposition}\label{proposition}
Let $(\phi,S,D)$ and $(\phi',S',D)$ be relatively minimal elliptic fibrations 
without multiple fibers and having the same number of singular fibers. Let 
$q_{1},\ldots,q_{k}$ (resp. $q_{1}',\ldots,q_{k}'$) be the critical values of $(\phi,S,D)$ (resp. $(\phi',S',D)$). Let
$\lambda$ (resp. $\lambda'$) be a monodromy representation for $(\phi,S,D)$ (resp. $(\phi',S',D)$). 
The following statements are equivalent 
\begin{enumerate}
\item $(\phi,S,D)\sim(\phi',S',D)$;
\item there exist an orientation preserving diffeomorphism 
$h:D\rightarrow D$ with $h(\{q_{1},\dots,q_{k}\})=\{q_{1}',\dots,q_{k}'\}$, $h(q_{0})=q_{0}'$, and a matrix $A\in SL(2,\mathbb{Z})$, such that $c_A\circ\lambda=\lambda' \circ h_*$, where $c_A$ denotes the automorphism of $SL(2,\mathbb{Z})$ defined by $c_A(B)=A^{-1}BA$, and $$h_*:\pi_1(D-\{q_{1},\ldots,q_{k}\},q_{0})\rightarrow \pi_1(D-\{q_{1}',\ldots,q_{k}'\},q_{0}')$$ is the group isomorphism
induced by $h$; 
\item there exist an isomorphism $$\psi:\pi_1(D-\{q_{1},\ldots,q_{k}\},q_{0})\rightarrow \pi_1(D-\{q_{1}',\ldots,q_{k}'\},q_{0}')$$
sending $[C_{r}]$ to $[C_{r'}]$,
and a matrix $A\in SL(2,\mathbb{Z})$, such that $c_A\circ\lambda=\lambda'\circ \psi$, where $c_A$ denotes the automorphism of $SL(2,\mathbb{Z})$ defined by $c_A(B)=A^{-1}BA$;
\item there exist special bases $\{ [\gamma_1],\ldots, [\gamma_k]  \}$ and $\{ [\gamma_1'],\ldots,[\gamma_k'] \}$ for the groups $\pi_1(D-\{q_1,\ldots,q_k\},q_0)$ and $\pi_1(D-\{q_1',\ldots,q_k'\},q_0')$, respectively, and a matrix $A\in SL(2,\mathbb{Z})$ such that the product $\lambda([\gamma_k])\ldots \lambda([\gamma_1])$ becomes
the product $\lambda'([\gamma_k'])\ldots \lambda'([\gamma_1'])$ after the application of a (finite) number of
Hurwitz moves, followed by the conjugation of all the elements in the resulting product by $A$.
\item for any pair of special bases $\{[\gamma_1],\ldots,[\gamma_k]\}$ and $\{[\gamma_1'],\ldots,[\gamma_k']\}$ for the groups $\pi_1(D-\{q_1,\ldots,q_k\},q_0)$ and $\pi_1(D-\{q_1',\ldots,q_k'\},q_0')$, respectively, there exists a matrix $A\in SL(2,\mathbb{Z})$ such that the product $\lambda([\gamma_k])\ldots\lambda([\gamma_1])$ becomes
the product $\lambda'([\gamma_k'])\ldots\lambda'([\gamma_1'])$ after the application of a (finite) number of
Hurwitz moves, followed by the conjugation of all the elements in the resulting product by $A$.
\end{enumerate}
\end{proposition}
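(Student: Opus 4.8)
The plan is to prove the equivalence of the five statements in Proposition~\ref{proposition} by establishing a cycle of implications, using the dictionary between elliptic fibrations over a disk and their monodromy data, together with the standard fact that the mapping class group of the $k$-punctured disk acts on special bases (equivalently, on ordered monodromy tuples) exactly through Hurwitz moves.

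\textbf{Setup.} First I would recall that, because these fibrations have no multiple fibers and are relatively minimal, a fibration $(\phi,S,D)$ is determined up to topological equivalence by the conjugacy class of its monodromy representation $\lambda$ together with the cyclic ordering of the punctures coming from a special basis; this is essentially the classical result (Kodaira, Moishezon) that the total space is reconstructible from the monodromy around each singular fiber. In particular an orientation-preserving self-diffeomorphism $h$ of $D$ permuting the critical values induces $h_*$ on $\pi_1$, and the compatibility $\phi_2\circ H = h\circ\phi_1$ for a bundle isomorphism $H$ covering $h$ translates precisely into $c_A\circ\lambda = \lambda'\circ h_*$ for the matrix $A$ recording the action of $H$ on the reference fiber. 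Conversely, given such $h$ and $A$, one glues the pullback fibration to produce the equivalence $H$. This handles $(1)\Leftrightarrow(2)$.

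\textbf{The core equivalences.} For $(2)\Leftrightarrow(3)$, the point is that every orientation-preserving isomorphism $\psi$ between the fundamental groups of the two punctured disks that sends $[C_r]$ to $[C_{r'}]$ is induced by some orientation-preserving diffeomorphism $h:D\to D$ carrying punctures to punctures — this is the statement that the mapping class group of the punctured disk surjects onto the appropriate subgroup of automorphisms of the free group preserving the peripheral structure (the Dehn--Nielsen--Baer-type statement for the disk with punctures). So $(3)$ is just $(2)$ with the geometric realization of the isomorphism suppressed. For $(3)\Leftrightarrow(4)\Leftrightarrow(5)$, I would invoke the standard description of this mapping class group as the braid group $B_k$ and the fact that its action on the set of special bases is transitive and is generated by the elementary braid generators, whose effect on the ordered tuple $(\lambda([\gamma_1]),\dots,\lambda([\gamma_k]))$ is exactly an elementary Hurwitz move; applying the induced isomorphism to $\lambda$ and then conjugating by a matrix $A$ to match $\lambda'$ on the resulting basis is then the same data as in $(3)$. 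Transitivity of the $B_k$-action on special bases gives that ``there exist'' ($4$) and ``for any'' ($5$) are equivalent: if the transformation works for one choice of special bases, precomposing/postcomposing with further braid moves (which are themselves Hurwitz moves) lets one reach any other choice, and conversely.

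\textbf{Main obstacle.} The genuinely nontrivial input is the surjectivity used in $(3)\Rightarrow(2)$: that an abstract automorphism of $\pi_1$ of the punctured disk fixing the boundary class is geometrically realized by a diffeomorphism permuting the punctures. I would either cite this from the literature on mapping class groups of punctured surfaces (it follows from the fact that $\mathrm{Aut}$ of a free group preserving a peripheral system is realized, combined with the classification of punctured-disk mapping classes as braids), or, to keep the argument self-contained, restrict attention from the start to isomorphisms of the special-basis form — i.e.\ prove $(1)\Leftrightarrow(2)\Leftrightarrow(4)\Leftrightarrow(5)$ directly and then note $(3)$ sits between $(2)$ and $(4)$ because a special-basis-preserving isomorphism visibly sends $[C_r]$ to $[C_{r'}]$ while, conversely, any $[C_r]\mapsto[C_{r'}]$ isomorphism can be composed with braid moves to become special-basis-preserving. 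The remaining steps — that Hurwitz moves correspond to changing special bases, and that conjugating all monodromies by $A$ corresponds to changing the fiber identification $j$ — are routine once the braid-group action is set up, so I would state them and move on rather than grinding through the diffeomorphism constructions.
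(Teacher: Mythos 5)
Your proposal follows essentially the same route as the paper: the equivalence $1\Leftrightarrow 2$ is the standard monodromy dictionary (the paper simply cites Matsumoto), the passage $3\Rightarrow 2$ rests on realizing a $[C_r]$-preserving automorphism of the free group by a diffeomorphism of the punctured disk via the generation of the stabilizer of $x_k\cdots x_1$ by elementary braid automorphisms (half twists), and $4\Leftrightarrow 5$ comes from the Hurwitz/braid transitivity on special bases. The only differences are presentational --- you phrase the key realization input in the language of Dehn--Nielsen--Baer and the braid group $B_k$ where the paper invokes the Magnus--Karrass--Solitar generation theorem, and you order the cycle of implications slightly differently --- so the two arguments coincide in substance.
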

In the rest of this section $\pi_1(D-\{q_1,\ldots,q_k\},q_0)$ (resp. $\pi_1(D-\{q_1',\ldots,q_k'\},q_0')$) will be abreviated by $\pi_1$ (resp. $\pi_1'$).

The equivalence $1\Leftrightarrow 2$ is a particular case of the result mentioned immediately after the 
statement of Theorem 2.4 of \cite{mat}. 

$2\Rightarrow 3$ is immediate, but its reciprocal is less obvious. Let $f:D\rightarrow D$ be an orientation preserving diffeomorphism such that $f(q_i')=q_i$ for $i=0,\ldots,k$.
It is enough to prove that the automorphism $f_*\circ \psi:\pi_1\rightarrow \pi_1$ (which preserves $[C_r]$) equals $h_*$ for some orientation preserving
diffeomorphism $h:D\rightarrow D$ such that $h(\{q_1,\ldots,q_k\})=\{q_1,\ldots,q_k\}$ and $h(q_0)=q_0$. 
Actually, let us see that every automorphism $\varphi$ of $\pi_1$ such that $\varphi([C_r])=[C_r]$ is induced by some orientation preserving diffeomorphism $h$ with the properties 
described in the last sentence. Let $\{[\gamma_1],\ldots,[\gamma_k]\}$ be a special basis for the group $\pi_1$, let $F(x_1,\ldots,x_k)$ be the free group in the alphabet $\{x_1,\ldots,x_k\}$ and
$\nu:F(x_1,\ldots,x_k)\rightarrow \pi_1$ the isomorphism sending $x_i$ to $[\gamma_i]$ for 
$i=1,\ldots,k$. 
Notice that $\nu(x_k\ldots x_1)=[\gamma_k]\star \ldots \star [\gamma_1] = [C_r]$. The well known fact (see \cite{Magnus}) that the group of automorphisms of $F(x_1,\ldots,x_k)$ which send the product $x_k\ldots x_1$ to itself, is generated by the elementary automorphisms $\{\phi_1,\ldots, \phi_{k-1}\}$ such that $\phi_i(x_j)=x_j$ if $j\notin \{i,i+1\}$, and $\phi_i(x_i)=x_i^{-1}x_{i+1}x_i$, 
$\phi_i(x_{i+1})=x_i$, allows us to reduce the problem to proving that each automorphism of $\pi_1$ defined as
$\varphi_i:=\nu \circ \phi_i \circ \nu^{-1}$ for $i=1,\ldots,k-1$, is induced by some orientation preserving diffeomorphism $h_i:D\rightarrow D$ with $h_i(\{q_1,\ldots,q_k\})=\{q_1,\ldots,q_k\}$ and $h_i(q_0)=q_0$. $h_i$
is explicitly constructed as a half twist performed on an appropiately chosen annulus containing the points 
$q_i$ and $q_{i+1}$.

$2 \Rightarrow 4$ is an immediate consequence of the fact that if $\{[\gamma_1],\ldots,[\gamma_k]\}$ is a special basis
for $\pi_1$ then for any orientation preserving diffeomorphism $h:D\rightarrow D$ with
$h(\{q_1,\ldots,q_k\})=\{q_1',\ldots,q_k'\}$ and $h(q_0)=q_0'$, $h_*([\gamma_1]),\ldots,h_*([\gamma_k])$ is a special basis
for $\pi_1'$.

$4\Rightarrow 5$.  Let $\{[\delta_1],\ldots,[\delta_k]\}$ (resp. $\{[\delta_1'],\ldots,[\delta_k']\}$) be special bases for $\pi_1$ (resp. $\pi_1'$). We have that $[\delta_k] \star \ldots \star [\delta_1]=
[\gamma_k]\star \ldots \star [\gamma_1]$ and $[\delta_k'] \star \ldots \star [\delta_1']=
[\gamma_k'] \star \ldots \star [\gamma_1']$. The well known fact from \cite{Magnus} invoked above is equivalent to the fact that
if $y_1,\ldots,y_k$ and $z_1,\ldots,z_k$ are free bases for $F(x_1,\ldots,x_k)$, such that $y_k\ldots y_1=z_k\ldots z_1$,
then there exists a finite sequence of Hurwitz moves that transforms the product $y_k\ldots y_1$ into the product
$z_k \ldots z_1$. Applied to our situation this gives the existence of a finite sequence of Hurwitz moves which transforms
the product $[\delta_k] \star \ldots \star [\delta_1]$
into the product $[\gamma_k]\star \ldots \star [\gamma_1]$, and another finite sequence of 
Hurwitz moves transforming the product $[\gamma_k'] \star \ldots \star [\gamma_1']$ into the product $[\delta_k'] \star \ldots \star [\delta_1']$. Combining
this with the existence of a sequence of Hurwitz moves and a conjugation transforming the product
$\lambda([\gamma_k])\ldots \lambda([\gamma_1])$ into the product $\lambda'([\gamma_k'])\ldots \lambda'([\gamma_1'])$
allows us to conclude that there exists a finite sequence of Hurwitz moves and a conjugation transforming
the product $\lambda([\delta_k])\ldots \lambda([\delta_1])$ into the product $\lambda'([\delta_k'])\ldots \lambda'([\delta_1'])$.

$5 \Rightarrow 3$. Let $\{[\gamma_1],\ldots,[\gamma_k]\}$ (resp. $\{[\gamma_1'],\ldots,[\gamma_k']\}$) be a special basis for $\pi_1$ (resp. $\pi_1'$). Then the product 
$\lambda([\gamma_k])\ldots \lambda([\gamma_1])$ can be transformed to the product $\lambda'([\gamma_k'])\ldots \lambda'([\gamma_1'])$ by applying a sequence $\mu_1,\ldots, \mu_l$ of Hurwitz moves, followed by the conjugation
of all the elements
in the resulting product by a matrix $A\in SL(2,\mathbb{Z})$. Let $\{[\gamma_1''],\ldots,  [\gamma_k'']\}$ be the 
special basis for $\pi_1$ obtained by applying the sequence $\mu_l^{-1},\ldots,
\mu_1^{-1}$ of Hurwitz moves to $\{[\gamma_1'],\ldots,[\gamma_k']\}$. Let $\psi:\pi\rightarrow \pi'$ be the isomorphism 
determined $\psi([\gamma_i])=[\gamma_i'']$ for each $i=1,\ldots,k$. It can easily verified that $c_A\circ \lambda=\lambda' \circ \psi$.

\section{Kodaira's list}

Let $(\phi,S,D)$ be an elliptic fibration and let $q\in D$. It is a well known fact that the fiber $\phi^{-1}(q)$ is
a triangulable topological space. In \cite{Kodaira} Kodaira studied the problem of classifying fibers in elliptic fibrations under the following  equivalence relation which takes into account not only the topological structure of 
the fiber but also the structure of the map $\phi$ in a regular neighborhood of it.

\begin{definition}
Let $(\phi,S,D)$ and $(\phi',S',D)$ be elliptic fibrations and let $q,q'\in D$. Let $\sum m_iX_i$ and $\sum n_jY_j$ be the effective divisors associated to $\phi^{-1}(q)$ and $(\phi')^{-1}(q')$, respectively. The fibers $\phi^{-1}(q)$ and $(\phi')^{-1}(q')$ are said to be \emph{of the same type} if there is a homeomorphism $f:\phi^{-1}(q)\rightarrow (\phi')^{-1}(q')$, so that the induced map $f_*:H_2(\phi^{-1}(q);\mathbb{Z})\rightarrow H_2((\phi')^{-1}(q');\mathbb{Z})$ sends the class $\sum m_i[X_i]$ to the class $\sum n_j[Y_j]$. 
\end{definition}

We will rely heavily on the following classical result due to Kodaira (see \cite{Kodaira}). 

\begin{theorem}\label{Kodaira}
Let $(\phi,S,D)$ be a relatively minimal elliptic fibration and let $q_i$ be a critical value of $\phi$. Then 
\begin{enumerate}
\item the fiber $\phi^{-1}(q_i)$ is of the same type of one and only one of the following pairs:\\
$wI_0$: $wX_0,\ w>1$ where $X_0$ is a non-singular elliptic curve.\\
$wI_1$: $wX_0,\ w\geq1$ where $X_0$ is a rational curve with an ordinary double point.\\
$wI_2$: $wX_0+wX_1,\ w\geq1$ where $X_0$ and $X_1$ are non-singular rational curves with intersection $X_0 \cdot X_1=p_1+p_2$.\\
$II$: $1X_0$ where $X_0$ is a rational curve with one cusp.\\
$III$: $X_0+X_1$ where $X_0$ and $X_1$ are non-singular rational curves with $X_0\cdot X_1=2p$.\\
$IV$: $X_0+X_1+X_2$, where $X_0,X_1,X_2$ are non-singular rational curves and $X_0 \cdot X_1=X_1 \cdot X_2=X_2\cdot X_0=p$.\\
The rest of the types are denoted by $wI_b,\ b\geq 3$, $I_b^*$, $II^*$, $III^*$, $IV^*$ and are composed of non-singular rational curves $X_0,X_1,\ldots,X_s,\ldots$ such that $X_s \cdot X_t \leq 1$ (i.e. $X_s$ and $X_t$ have at most one simple intersection point) for $s<t$ and $X_r \cap X_s \cap X_t$ is empty for $r<s<t$. These types are therefore described completely by showing all pairs $X_s,\ X_t$ with $X_s\cdot X_t=1$ together with  $\sum w_iX_i$.\\
$wI_b$: $wX_0+wX_1+\ldots+wX_{b-1}$, $w=1,2,3,\ldots$, $b=3,4,5,\ldots$, $X_0 \cdot X_1=X_1\cdot X_2=\ldots=X_s\cdot X_{s+1}=\ldots =X_{b-2}\cdot X_{b-1}=X_{b-1}\cdot X_0=1$.\\
$I_b^*$: $X_0+X_1+X_2+X_3+2X_4+\ldots+2X_{4+b}$ where $b\geq 0$, and $X_0 \cdot X_4=X_1 \cdot X_4= X_2 \cdot X_{4+b}=X_3 \cdot X_{4+b}=X_4 \cdot X_5=X_5 \cdot X_6=\ldots=X_{3+b}\cdot X_{4+b}=1$.\\
$II^*$: $X_0+2X_1+3X_2+4X_3+5X_4+6X_5+4X_6+3X_7+2X_8$, where $X_0 \cdot X_1=X_1 \cdot X_2= X_2 \cdot X_3=X_3 \cdot X_4=X_4 \cdot X_5=X_5 \cdot X_7=X_5\cdot X_6=X_6\cdot X_8=1$.\\
$III^*$: $X_0+2X_1+3X_2+4X_3+3X_4+2X_5+2X_6+X_7$, where $X_0 \cdot X_1=X_1 \cdot X_2= X_2 \cdot X_3=X_3 \cdot X_5=X_3 \cdot X_4=X_4 \cdot X_6=X_6\cdot X_7=1$.\\
$IV^*$: $X_0+2X_1+3X_2+2X_3+2X_4+X_5+X_6$, where $X_0 \cdot X_1=X_1 \cdot X_2= X_2 \cdot X_3=X_2 \cdot X_4=X_3 \cdot X_5=X_4 \cdot X_6=1$.\\
\item the conjugacy class of $\lambda([\gamma_i])$, where $[\gamma_i]$ is the $i^{th}$ term in any special basis for $\pi_1(D-\{q_1,\ldots,q_k\},q_0)$, depends only on the type of the fiber $\phi^{-1}(q_i)$.\\
\item for each type $T$ above there exists a relatively minimal elliptic fibration $(\phi_T,S_T,D)$ with $F_T:=\phi_T^{-1}(0)$ as its unique singular fiber, and having type $T$.
\end{enumerate}
\end{theorem}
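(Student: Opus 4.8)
The plan is to reduce everything to a local statement around $q_i$: choose a small disk $D_i\ni q_i$ containing no other critical value and replace $(\phi,S,D_i)$ by its relatively minimal model, which is unique since relative minimality is precisely the absence of $(-1)$-curves inside fibers. All three assertions then concern the germ of $\phi$ along $F:=\phi^{-1}(q_i)=\sum_j m_jX_j$, which I would study by intersection theory on $S$. For part (1), the starting point is that $F$ is algebraically equivalent to a nearby smooth fiber disjoint from it, so $F\cdot X_j=0$ for every $j$; hence the intersection matrix $(X_s\cdot X_t)$ is negative semi-definite, its radical is spanned by $(m_j)$, and by Zariski's lemma the dual graph of $F$ is connected. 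If $F$ is irreducible it is a curve of arithmetic genus one, i.e. a smooth elliptic curve ($I_0$), a nodal rational curve ($I_1$), or a cuspidal rational curve ($II$). If $F$ is reducible then — using that $K_S$ restricts trivially to a non-multiple fiber, together with adjunction — each component is a smooth rational curve of self-intersection $-2$, and the classification of connected negative semi-definite integral symmetric matrices with diagonal $-2$ and one-dimensional radical coincides with that of affine Cartan matrices: one obtains the diagrams $\widetilde A_n,\widetilde D_n,\widetilde E_6,\widetilde E_7,\widetilde E_8$, and matching the radical generator with the vector of marks of the corresponding affine diagram recovers the configurations and multiplicities of $I_b\ (b\ge3)$, $I_b^*$, $IV^*$, $III^*$, $II^*$; the low-rank diagrams, together with the possibility of a tangency or a double intersection point, give $I_2$ and $III$ ($\widetilde A_1$ realized by two $(-2)$-curves meeting at two points, resp. tangentially) and $IV$ (three concurrent $(-2)$-curves). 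Finally, $\gcd_j m_j=w>1$ occurs exactly for the multiple fibers $wI_b$ and $wI_0$: after the base change $z\mapsto z^w$ and normalization such a fiber becomes the corresponding reduced type.

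Part (2) has two halves. First, the conjugacy class of $\lambda([\gamma_i])$ does not depend on the chosen special basis or on $r$, $q_0$, $j$: any such change replaces $\lambda([\gamma_i])$ by a conjugate in $SL(2,\mathbb Z)$ (a conjugation argument in the spirit of Remark \ref{nota2}), so this class is a well-defined invariant of the germ of $\phi$ along $\phi^{-1}(q_i)$. Second, this invariant depends only on the type: for fibers of Lefschetz type — the $I_b$ with $b\ge1$ — the monodromy is, by the Picard--Lefschetz formula, the product of the transvections along the $b$ vanishing cycles, and its conjugacy class in $SL(2,\mathbb Z)=Sp(2,\mathbb Z)$ depends only on the mutual position of those cycles in $H_1$ of the fiber, which is encoded in the type; for the remaining types one reads the monodromy off the explicit models produced in part (3) and checks that equal types give conjugate monodromies. (In all cases the resulting matrix is quasi-unipotent, as Landman's theorem predicts.) Since topologically equivalent germs have conjugate monodromies, (2) follows.

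For part (3), for each type $T$ I would exhibit a Weierstrass model $y^2=x^3+A(z)x+B(z)$ over $D$ with $A,B$ holomorphic, discriminant $\Delta=4A^3+27B^2$ vanishing only at $z=0$, and vanishing orders $(\operatorname{ord}_0A,\operatorname{ord}_0B,\operatorname{ord}_0\Delta)$ chosen according to the Kodaira--N\'eron table (equivalently, Tate's algorithm) so that resolving the singularities of the total space — and blowing down any $(-1)$-curves that appear inside fibers — produces a relatively minimal elliptic fibration whose only singular fiber is of type $T$; for the multiple types $wI_b$ one applies in addition a logarithmic transformation of order $w$. That the curves and intersection patterns listed in part (1) actually occur is then checked case by case from these models.

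The step I expect to be the real obstacle is part (1): the lattice-theoretic classification must be carried out carefully enough to separate the genuinely different small cases in which a component is singular or the dual graph carries a multiple edge or a concurrency ($I_0$ vs.\ $I_1$ vs.\ $II$; $I_2$ vs.\ $III$; $I_3$ vs.\ $IV$), and to see that the marks of the affine diagrams reproduce Kodaira's multiplicities exactly; matching these multiplicities with the quasi-unipotent local monodromy of part (2), and treating the multiple fibers consistently across all three parts, are the remaining delicate points.
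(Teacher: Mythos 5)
The paper contains no proof of Theorem \ref{Kodaira}: it is stated verbatim as a classical result and attributed to Kodaira's paper \cite{Kodaira}, so there is nothing in the source to compare your attempt against --- the authors use the theorem strictly as a black box (only the monodromy matrices $M_T$ and Euler characteristics from Table \ref{primeratabla} are actually used later).

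Judged on its own terms, your outline is the standard modern route (essentially the one in Barth--Hulek--Peters--Van de Ven): Zariski's lemma forces the intersection matrix of a reducible fiber to be negative semi-definite with one-dimensional radical, adjunction forces each component to be a $(-2)$-rational curve, and the lattice classification reproduces the affine diagrams $\widetilde A_n,\widetilde D_n,\widetilde E_6,\widetilde E_7,\widetilde E_8$ with their marks as multiplicities; existence comes from Weierstrass models and Tate's table, and the multiple types from logarithmic transformations. This is a genuinely different path from Kodaira's original argument, which runs through the functional invariant $\mathcal J$ and the period mapping and derives the monodromy and the existence statement (3) from the explicit construction of basic elliptic surfaces over the punctured disk. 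Your plan is sound as a sketch, but be aware of three inputs you invoke without proof and which are not formalities: (i) the triviality of $K_S$ on fiber components is the local canonical bundle formula, which must be established \emph{before} the adjunction step, not quoted from the global theory; (ii) part (1) as stated also asserts uniqueness of the type, i.e.\ that the listed configurations are pairwise non-homeomorphic respecting the divisor classes, which your lattice argument gives only after the small ambiguous cases ($I_2$ vs.\ $III$, $I_3$ vs.\ $IV$) are separated by the actual geometry of the intersection points, as you note; and (iii) the claim that multiple fibers occur exactly for the types $wI_b$ needs its own argument (existence via logarithmic transformation does not by itself exclude multiple versions of $II$--$IV^*$; one standard route is through the torsion of the normal bundle of the reduced fiber or through the local monodromy). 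None of this affects the paper, which does not attempt the proof.
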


 The following table contains for each type $T$, a matrix representative $M_T$ of the conjugacy class of the total monodromy of $(\phi_T,S_T,D)$, the Euler characteristic of $S_T$ (which is the same as the Euler characteristic of $F_T$) (see \cite{VandeVen}), and 
a particular factorization of $M_T$ in $SL(2,\mathbb{Z})$ which will play a central role in next section.
\begin{equation}\label{primeratabla}
\begin{array}{cccccc}
T & \ \ &  M_T & \ \ & \chi(S_T) & m.n.f. \\
    \ \ & \ \ & \ \ & \ \ & \ \ \\ 
wI_n \ (w\geq 1,n\geq 0) &\ \ &   \left[\begin{array}{rr} 1 & n \\ 0 & 1 \end{array} \right] & \ \ & n  &  U^{n}\\
        \ \ & \ \ & \ \ & \ \ & \ \ \\ 
II & \ \ & \left[\begin{array}{rr} 1 & 1 \\ -1 & 0 \end{array} \right]& \ \ & 2\ \ & VU\\
      & \ \  & \\
III  & \ \ &\left[\begin{array}{rr} 0 & 1 \\ -1 & 0 \end{array} \right]& \ \ & 3\ \ & VUV \\
          \ \ & \ \ & \ \ & \ \ & \ \ \\ 
IV  & \ \ & \left[\begin{array}{rr} 0 & 1 \\ -1 & -1 \end{array} \right]& \ \ & 4\ \ & (VU)^2\\
          \ \ & \ \ & \ \ & \ \ & \ \ \\ 
I_n^* \ (n\geq 0) & \ \ &\left[\begin{array}{rr} -1 & -n \\ 0 & -1 \end{array} \right] & \ \ & n+6\ \ & U^{n}(VU)^{3} \ (=-U^{n}) \\
          \ \ & \ \ & \ \ & \ \ & \ \ \\ 
II^* & \ \ &\left[\begin{array}{rr} 0 & -1 \\ 1 & 1 \end{array} \right] & \ \ & 10\ \ & VU(VU)^3 \ (=-VU) \\
          \ \ & \ \ & \ \ & \ \ & \ \ \\ 
III^* & \ \ &\left[\begin{array}{rr} 0 & -1 \\ 1 & 0 \end{array} \right] & \ \ & 9\ \ & VUV(VU)^3 \ (=-VUV) \\
        \ \ & \ \ & \ \ & \ \ & \ \ \\ 
IV^* &  \ \ &\left[\begin{array}{rr} -1 & -1 \\ 1 & 0 \end{array} \right]& \ \ & 8\ \ & (VU)^2(VU)^3 \ (=-(VU)^2) \\
\end{array}
\end{equation}

\section{Factorization of Kodaira's matrices in terms of conjugates of $U$}

In this section we recall some basic facts about the group $SL(2,\mathbb{Z})$, formed by all  $2\times 2$ matrices with integral entries and determinant
$1$, and about the modular group $PSL(2,\mathbb{Z)}$\ defined as
the quotient $SL(2,\mathbb{Z)}/\{\pm Id_{2\times 2}\}$, and prove some uniqueness results (Theorems \ref{Main} and \ref{fuerte}) about the factorization in terms of conjugates of 
the matrix 
$$U=\left[ 
\begin{array}{cc}
1 & 1 \\ 
0 & 1%
\end{array}
\right],$$
of the matrices appearing in Kodaira's list (second column of Table \ref{primeratabla}).

\subsection{Study of $PSL(2,\mathbb{Z})$ as $\left\langle w,b\left\vert \ w^{2}=b^{3}=1\right.
\right\rangle$}
Although a significant part of the material in this section can be found in references \cite{Moishezon},
\cite{Friedman-Morgan}, \cite{Matsumoto}, for the sake of
completeness we have included complete proofs of those results that are more specialized.

In what follows we will refer to particular elements (classes) in $PSL(2,
\mathbb{Z})$ by specifying one of its representatives. \emph{We will use
capital letters for the elements of} $SL(2,Z)$ \emph{and the corresponding 
lower case letters for their images in $PSL(2,\mathbb{Z})$.} For example, since 
$U=\left[ 
\begin{array}{cc}
1 & 1 \\ 
0 & 1
\end{array}
\right]$
then $u$ denotes the class $\pm U$. 

It is a well known fact that the modular group is isomorphic to the free
product $\mathbb{Z}_{2}\ast \mathbb{Z}_{3}$ via an isomorphism taking a generator of
$\mathbb{Z}_{2}$ to 
$w=\left[ 
\begin{array}{cc}
0 & 1 \\ 
-1 & 0
\end{array}
\right],$ and a generator of $\mathbb{Z}_{3}$ to $b=wu$. Hence, 
\begin{equation*}
G=PSL(2,\mathbb{Z})\cong \left\langle w,b\left\vert \ w^{2}=b^{3}=1\right.
\right\rangle .
\end{equation*}
From this we see that the abelianization of $G$ is $\mathbb{Z}_{2}\times 
\mathbb{Z}_{3}$ with any conjugate of $u$ being sent to $1$. Consequently, the abelianization of 
$SL(2,\mathbb{Z})$ is $\mathbb{Z}_{12}$, with any conjugate of the matrix 
$U$ being sent to $1$.

It also follows that each element $a\neq id_{2\times 2}$ in this group can be written uniquely
as a product $a=t_{k}\cdots t_{1}$, where each $t_{i}$ is either $w,b,$ or 
$b^{2}$ and no consecutive pair $t_{i+1}t_{i}$ is formed either by two powers
of $b$ or two copies of $w$. We call the product $t_{k}\cdots t_{1}$ the 
\emph{reduced expression}\textit{\ of }$a,$ and $k$ the \emph{length} of $a,$
which we will denote it by $l(a).$ Let $c=t_{1}^{\prime }\cdots t_{l}^{\prime
} $ be the reduced expression of an element $c\neq id_{2\times 2}$. If exactly the first $m\geq1$
terms of $c$ cancel with those of $a$, i.e. $t_{i}^{\prime }=t_{i}^{-1}$,
for $1\leq i\leq m$, and if $m< \min (k,l),$ then $ac=t_{k}\cdots
t_{m+1}t_{m+1}^{\prime }\cdots t_{l}^{\prime }$ and $t_{m+1}t_{m+1}^{\prime }$ has
to be equal to a non trivial power of $b.$ This is because if $t_{m+1}$ were
not a power of $b$ then it would have to be $w$ and therefore $t_{m}$
would be a first or second power of $b,$ and so would be $t_{m}^{\prime}.$
Hence, $t_{m+1}^{\prime }$ would also have to be $w$ but in this case there
would be $m+1$ instead of $m$ cancellations at the juncture of $a$ and $c$.
Thus, $t_{m+1}$ and $t_{m+1}^{\prime}$ are both powers of $b$ and since there
are exactly $m$ cancellations their product must be non trivial. Thus, the
reduced expression for $ac$ is of the form
\begin{equation}
ac=t_{k}\cdots t_{m+2}b^{e}t_{m+2}^{\prime }\cdots t_{l}^{\prime }\text{, }%
e=1\text{ or }2\text{, \ if }m < \min (k,l).  \label{F0}
\end{equation}

Let $s_{1}$ denote the element $bwb$. The shortest conjugates of $s_{1}$ in 
$G$ are precisely $s_{0}=b^{2}(bwb)b=wb^{2}$ with length 2, $s_{2}=b(bwb)b^{2}=b^{2}w$ 
with length 2, and $s_{1}$ itself with length 3. It
can easily be seen that any other conjugate $g$ of $s_{1}$ has length greater than $3$, and
that its reduced expression is of the form $q^{-1}s_{1}q,$ where $q$ is a reduced word that
begins with $w$ (see \cite{Friedman-Morgan}), therefore $l(g)=2l(q)+3$. A conjugate $g$ of $s_{1}$ will be called \emph{short} if $g\in \{s_{0},s_{1},s_{2}\},$ and it will be called \emph{long} otherwise.

\label{automorfismo}\emph{Let} $c_{b}(a)=b^{-1}ab$ \emph{denote conjugation
by} $b$. \emph{This is an automorphism of} $G$ \emph{that sends} $u=wb$
\emph{to} $b^{2}wb^{2}$. \emph{The map} $\varphi :\mathbb{Z}_{2}\ast
\mathbb{Z}_{3}\rightarrow \mathbb{Z}_{2}\ \ast \mathbb{Z}_{3}$ \emph{defined by sending} $w$ \emph{to itself, and} $b$ \emph{to} $b^{2}$, \emph{that is}, $\varphi
=Id\ast \psi$, \emph{where} $\psi$ \emph{is the automorphism of} $\mathbb{Z}_{3}$
\emph{\ that sends }$b$\emph{\ to }$b^{2}$\emph{, is an automorphism that
maps }$b^{2}wb^{2}$\emph{\ to }$s_{1}.$\emph{\ }Hence the composite $\varphi \circ c_b$ of these
two automorphisms is an automorphism $\rho $ that sends $u$ and $v$ into $%
s_{1}$ and $s_{0},$ respectively, and takes conjugates of $u$ into
conjugates of $s_{1}$.\\

The following notion is the key ingredient for understanding the reduced
expression of a product of conjugates of $s_{1}.$

\begin{definition}
We will say that two conjugates $g$ and $h$ of $s_{1}$ \emph{join well} if 
$$l(gh)\geq \max (l(g),l(h)).$$
\end{definition}

In \cite{Friedman-Morgan} (Lemma 4.10) the following result is proved.

\begin{lemma}
\label{largo}Suppose that $g=t_{k}\cdots t_{1}$ and $h=t_{1}^{\prime }\cdots
t_{l}^{\prime }$ are the reduced expressions of two conjugates of $s_{1}$
that join well. When $gh$ is calculated either:

\begin{enumerate}
\item no cancellation occurs, and in this case $t_{k}\cdots
t_{1}t_{1}^{\prime }\cdots t_{l}^{\prime }$ is the reduced expression of $gh$,
 or

\item exactly the first $m\geq 1$ terms of $g$ and $h$ cancel out, in which case
\begin{equation} \label{desigualdad}
m < \min (k,l). 
\end{equation} 
Moreover, if $g$ is short or $h$ is
short, then both are short and they are $s_{2}$ and $s_{0}$, respectively. 
If both are
long with reduced expressions of the form $g=q_{1}^{-1}s_{1}q_{1}$ and $
h=q_{2}^{-1}s_{1}q_{2}$, hence with lenghts $2l(q_{i})+3$, then the
reduced expression of $gh$ is of the form 
\begin{equation*}
gh=t_{k}\cdots t_{m+2}b^{e}t_{m+2}^{\prime }\cdots t_{l}^{\prime }\text{, }%
e=1 \text{or} 2,
\end{equation*}
and the inequality (\ref{desigualdad}) can be improved to 
$$m < \min
((k-1)/2,(l-1)/2)$$
which implies that $m \leq \min (l(q_{1}),l(q_{2})).$
\end{enumerate}
\end{lemma}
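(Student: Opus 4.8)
The plan is to work entirely inside the free product $G=\mathbb{Z}_2 \ast \mathbb{Z}_3 = \langle w,b \mid w^2=b^3=1\rangle$ and to exploit the normal form (\ref{F0}) for products in $G$. Given two conjugates $g=t_k\cdots t_1$ and $h=t_1'\cdots t_l'$ of $s_1$ that join well, I would first dispose of the no-cancellation case: if $t_1 t_1' \neq 1$, then either no letters cancel at the juncture, in which case $t_k\cdots t_1 t_1'\cdots t_l'$ is already reduced (this is conclusion 1), or $t_1=t_1'=w$ — but then $t_2$ and $t_2'$ are both powers of $b$, and examining $s_1=bwb$, $s_0=wb^2$, $s_2=b^2w$ and the general long form $q^{-1}s_1 q$ (with $q$ beginning in $w$), one sees where such a $w$ can sit. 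The real content is the cancellation case, so assume exactly the first $m\geq 1$ terms cancel, i.e. $t_i'=t_i^{-1}$ for $1\le i\le m$.

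Next I would establish $m<\min(k,l)$. Suppose instead $m=k\leq l$ (the symmetric case is analogous): then all of $g$ cancels into $h$, so $gh$ is a terminal segment of $h$, hence $l(gh)<l(h)=\max(l(g),l(h))$ unless $k=0$, contradicting that $g$ and $h$ join well. Once $m<\min(k,l)$, the argument already given in the excerpt for (\ref{F0}) applies verbatim: $t_{m+1}$ and $t_{m+1}'$ cannot be $w$ (else there would be an $(m{+}1)$-st cancellation, since their neighbors $t_m,t_m'$ would then be powers of $b$ that are mutual inverses), so both are nontrivial powers of $b$, and since only $m$ cancellations occur their product $b^e$ ($e=1$ or $2$) is nontrivial. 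This gives the asserted reduced form $gh=t_k\cdots t_{m+2}\,b^e\,t_{m+2}'\cdots t_l'$.

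For the structural refinements I would split on short vs. long. If $g$ is short, then $g\in\{s_0,s_1,s_2\}$; checking the three reduced words $wb^2$, $bwb$, $b^2w$ against the requirement $t_1'=t_1^{-1}$ and the description of the possible first letters of a conjugate of $s_1$ ($w$, or a power of $b$ as in $s_0,s_1$), one finds that a genuine cancellation ($m\geq 1$) forces the first letter of $g$ to be a power of $b$ — so $g\in\{s_1,s_2\}$ — and a short cleanup then forces $g=s_2=b^2w$, $h=s_0=wb^2$, giving $gh=b^2 w w b^2 = b^4 = b$, consistent with the formula. The symmetric check handles "$h$ short". Finally, in the long case write $g=q_1^{-1}s_1 q_1$, $h=q_2^{-1}s_1 q_2$ with $q_i$ reduced starting in $w$, so $l(g)=2l(q_1)+3$, $l(h)=2l(q_2)+3$. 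The first $l(q_1)$ letters of $g$ are exactly $q_1^{-1}$ and the first $l(q_2)$ of $h$ are $q_2^{-1}$; the cancellation begins with $q_1^{-1}$ meeting $q_2^{-1}$, and I would show that cancellation must stop strictly before reaching the central $s_1$ blocks on both sides, otherwise $l(gh)$ would drop below $\max(l(g),l(h))$, contradicting join-well. That forces $m\leq\min(l(q_1),l(q_2))$; a parity/counting bookkeeping (each side contributes its $q_i^{-1}$ of length $l(q_i)=(l(g)-3)/2$ before the $s_1$ core) upgrades this to $m<\min((k-1)/2,(l-1)/2)$.

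The main obstacle I anticipate is the long–long case: making precise "cancellation stops before the $s_1$ core" requires tracking that $q_1^{-1}$ ends and $s_1=bwb$ begins in a way that the juncture letter is not $w$, and carefully ruling out the boundary case $m=l(q_i)$ where the $b$ at the start of the $s_1$ block could absorb into the tail of $q_i^{-1}$. Here I would use the join-well hypothesis quantitatively: if $m$ reached $l(q_1)$, then $g$'s surviving part is just (a piece of) $s_1 q_1$, of length at most $3+l(q_1)<2l(q_1)+3=l(g)$ once $l(q_1)>0$, and one checks $l(gh)$ cannot recover enough length from $h$'s side to reach $\max(l(g),l(h))$ — a contradiction. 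Everything else reduces to finitely many explicit word computations in $G$, which the normal-form rule (\ref{F0}) makes routine.
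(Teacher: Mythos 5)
The paper does not prove this lemma at all---it is quoted verbatim from Friedman--Morgan (their Lemma 4.10)---so your attempt has to be judged on its own terms. The overall skeleton is sensible (reduce to the juncture analysis behind (\ref{F0}), kill $m=\min(k,l)$ by a length count against the join-well hypothesis, then treat short and long factors separately), and those two steps are fine. But the step that carries the real content, the bound $m\le\min(l(q_1),l(q_2))$ in the long--long case, is wrong as you set it up: you propose to derive a contradiction already from $m=l(q_1)$. That is precisely the extremal case the lemma \emph{allows}, and it genuinely occurs: take $g=h=q^{-1}s_1q$ with $q=w$, so $g=h=wbwbw$. Exactly $m=1=l(q)$ syllable cancels, the two central $b$'s amalgamate, and $gh=wbwb^{2}wbw$ has length $7\ge\max(5,5)$, so the pair joins well and there is no contradiction. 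Your length estimate also fails numerically: with $m=l(q_1)$ one gets $l(gh)=l(h)+2\ge\max(l(g),l(h))$ whenever $l(h)\ge l(g)$, so $gh$ \emph{does} ``recover enough length from $h$'s side.'' What must be excluded is $m\ge l(q_1)+1$, and the argument is structural rather than purely quantitative: if the cancellation consumes the syllable $t_{l(q_1)+1}=b$ of $g$'s central $s_1=bwb$, then $t'_{l(q_1)+1}=b^{2}$, and since syllables alternate the next syllable of $h$ is forced to be $w=t_{l(q_1)+2}^{-1}$, so the cancellation is forced to continue to $m\ge l(q_1)+2$; only then does $l(gh)\le l(g)+l(h)-2m\le l(h)-1<\max(l(g),l(h))$ contradict joining well. (Also, the surviving piece of $g$ is $q_1^{-1}s_1$, not $s_1q_1$.)

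Two further problems. Your opening dichotomy is incoherent as written: under the assumption $t_1t_1'\ne 1$ the alternative ``$t_1=t_1'=w$'' cannot occur (it would give $t_1t_1'=w^{2}=1$), and the case you actually need to discuss---$t_1,t_1'$ both nontrivial powers of $b$ with $t_1t_1'\ne 1$, i.e.\ amalgamation without cancellation, as in $s_1\cdot s_1=bwb^{2}wb$---is never addressed, even though it is the only way the claim in conclusion 1 that the concatenation is already reduced can fail. In the short case you argue about the ``first letter of $g$,'' whereas the cancellation condition concerns the \emph{last} syllable $t_1$ of $g$ against the first syllable $t_1'$ of $h$; you never verify the join-well hypothesis for the pair $(s_2,s_0)$ you produce (in fact $s_2s_0=b$ has length $1<2$, so under the paper's definition of joining well this clause is vacuous---worth flagging rather than calling ``consistent''); and the short--long configurations with cancellation are waved at but not ruled out, which again requires tracing the forced continuation of the cancellation as above.
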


Now suppose that $g$ and $h$ are two conjugates of $s_1$ that do not join well, and
are not both short. The next lemma shows that in this case there exist 
$g^{\prime }$ and $h^{\prime }$ conjugates of $s_{1}$ such that $gh=g^{\prime
}h^{\prime }$ and $l(g^{\prime })+l(h^{\prime})<l(g)+l(h)$ (\cite
{Friedman-Morgan}, Proposition 4.15).

\begin{lemma}
\label{fridman}Suppose that $g$ and $h$ are conjugates of $s_{1}$ which
satisfy the inequality $l(gh)$ $<\max (l(g),l(h)),$ and assume that at least
one of them is long. Then $l(g) \neq l(h).$ If $l(g)< l(h)$, the
elements $g^{\prime }=ghg^{-1}$, $h^{\prime }=g$ are conjugates of $s_{1}$ and
satisfy:

\begin{enumerate}
\item $gh=g^{\prime }h^{\prime}$, and

\item $l(g^{\prime })+l(h^{\prime })<l(g)+l(h).$
\end{enumerate}

If instead, $l(h)<l(g),$ then the same conclusion holds taking $g^{\prime
}=h,$ and $h^{\prime }=h^{-1}gh.$
\end{lemma}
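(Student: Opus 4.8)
The plan is to work entirely inside $G = \mathbb{Z}_2 \ast \mathbb{Z}_3$ using the normal-form machinery already set up, and to split into cases according to whether $g$ and $h$ are short or long. First I would dispose of the easy structural claims. Note that $g^{\prime} = ghg^{-1}$ and $h^{\prime} = g$ are manifestly conjugates of $s_1$ (being conjugates of $g$ and $h$), and that $g^{\prime}h^{\prime} = ghg^{-1}g = gh$, so conclusion (1) is free; the entire content is the length inequality (2), together with the dichotomy $l(g) \neq l(h)$. For the latter I would argue by contradiction: if $l(g) = l(h)$ and the product does not join well, I want to show this forces both to be short, contradicting the hypothesis that at least one is long. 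This should come out of Lemma \ref{largo}: when $gh$ fails to join well there must be cancellation (case 2 of that lemma, since case 1 gives $l(gh) = l(g)+l(h) \geq \max$), and if both were long then the improved inequality $m < \min((k-1)/2,(l-1)/2)$ together with $l(gh) = k + l - 2m - (\text{1 if the }b^e\text{ survives, else more})$ would actually give $l(gh) \geq \max(l(g),l(h))$ when $k = l$, a contradiction. (One has to be a little careful: with $k=l$ the surviving middle $b^e$ contributes, and $l(gh) = 2(k - m) - 1 \geq 2(m+1) - 1 > k$ using $m < (k-1)/2$.) If exactly one is short — say $g$ short, $h$ long — then $l(g) \in \{2,3\}$ while $l(h) \geq 5$, so automatically $l(g) \neq l(h)$ and moreover $l(g) < l(h)$, which is the branch we must handle anyway.

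Next, the main estimate. Assume $l(g) < l(h)$; I must show $l(ghg^{-1}) + l(g) < l(g) + l(h)$, i.e. $l(ghg^{-1}) < l(h)$. The sub-case where $g$ is short: then $g \in \{s_0, s_1, s_2\}$, and I would just examine what conjugating a long $h = q^{-1}s_1 q$ by each of $s_0, s_1, s_2$ does to the reduced word, using the cancellation rules (\ref{F0}) repeatedly; because $g$ has length $\leq 3$, $ghg^{-1}$ has length at most $l(h) + 2l(g) \leq l(h) + 6$ a priori, but the failure of joining well (which is what $l(gh) < l(h)$ encodes) forces substantial cancellation at the $g$–$h$ juncture, and one shows this cancellation propagates so that the net effect is a decrease. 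The sub-case where $g$ is long (so $h$ is long too, with $l(h) > l(g)$): write $g = q_1^{-1} s_1 q_1$, $h = q_2^{-1} s_1 q_2$, so $l(q_2) > l(q_1)$. Then $ghg^{-1} = (q_1 q_2^{-1})^{-1} \cdot q_1 s_1 q_1^{-1} \cdot \ldots$ — more cleanly, $ghg^{-1}$ is conjugate of $s_1$ by $q_2 g^{-1}$... I would instead directly compute: $ghg^{-1} = g h g^{-1}$, and since $g$ is a conjugate of $s_1$ we can also write $ghg^{-1} = (q_2 q_1^{-1} \, q_1 \cdots)$. The cleanest route: $l(gh) < l(h)$ with both long means (by Lemma \ref{largo}, case 2) that $g$ and $h$ "overlap heavily" — the $q_1$-tail of $g$ cancels into the $q_2^{-1}$-head of $h$ — and one extracts from the reduced form of $gh$ that in fact $gh = g^{\prime} h^{\prime}$ with $g^{\prime} = ghg^{-1}$ short or at least strictly shorter than $h$, because the piece of $h$ that "absorbed" $g$ gets released when we multiply by $g^{-1}$ on the right. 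The symmetric case $l(h) < l(g)$ is handled by the mirror choice $g^{\prime} = h$, $h^{\prime} = h^{-1}gh$, with an identical argument after swapping roles.

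I expect the main obstacle to be the long–long sub-case of the length inequality: tracking exactly how many letters cancel in $ghg^{-1}$ and verifying the strict drop requires carefully bookkeeping the reduced expression of $gh$ produced by Lemma \ref{largo}, then multiplying on the right by $g^{-1} = q_1^{-1} s_1^{-1} q_1$ and checking the cancellation pattern at that new juncture. The key quantitative input is the improved inequality $m \leq \min(l(q_1), l(q_2))$ from Lemma \ref{largo}, which guarantees the overlap does not exceed the shorter conjugating word; combined with $l(q_1) < l(q_2)$ this is what ultimately yields $l(g^{\prime}) + l(h^{\prime}) < l(g) + l(h)$. Everything else — the conjugacy claims, the identity $gh = g^{\prime}h^{\prime}$, and the dichotomy $l(g) \neq l(h)$ — is comparatively routine once the normal-form lemmas are in hand. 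Since this is precisely Proposition 4.15 of \cite{Friedman-Morgan}, I would also remark that one may simply cite that reference, but the self-contained argument above is short enough to include.
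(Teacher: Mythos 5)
The paper offers no proof of Lemma \ref{fridman} at all: it is quoted from \cite{Friedman-Morgan} (Proposition 4.15), so your closing observation that one could simply cite that reference is in fact exactly what the authors do, and there is no in-paper argument to compare yours against. Judged as a self-contained proof, your sketch has one logical error and one large hole. The error is in the argument for $l(g)\neq l(h)$: you invoke the inequality $m<\min((k-1)/2,(l-1)/2)$ of Lemma \ref{largo}, but that inequality is part of the \emph{conclusion} of Lemma \ref{largo} and is available only under its hypothesis that $g$ and $h$ join well --- the exact opposite of your situation. The implication ``$gh$ does not join well $\Rightarrow m<\min((k-1)/2,(l-1)/2)$'' is false: take $g=w s_1 w$ (so $k=5$) and $h=(wbw)^{-1}s_1(wbw)$ (so $l=9$); then three letters cancel at the juncture, $m=3>2=(k-1)/2$, and indeed $l(gh)=7<9$, so they do not join well. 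What you actually need is obtained by looking at the juncture directly: if $g=q_1^{-1}s_1q_1$ and $h=q_2^{-1}s_1q_2$ are both long with $l(q_1)=l(q_2)$, then either $q_1=q_2$, in which case $gh=q_1^{-1}s_1^{2}q_1$ has length $2l(q_1)+5>l(g)$, or the cancellation inside $q_1q_2^{-1}$ stops strictly before exhausting either word, leaving a reduced middle block beginning and ending with $w$ that cannot cancel into the flanking copies of $s_1$, giving $l(gh)\geq 2l(q_1)+7$. Either way equal-length long conjugates join well, which is the contradiction you want; the one-short-one-long case is, as you say, automatic.

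The hole is that the actual content of the lemma --- the strict inequality $l(ghg^{-1})<l(h)$ when $l(g)<l(h)$, equivalently $l(g')+l(h')<l(g)+l(h)$ --- is never established. You correctly reduce everything else to it (the conjugacy statements and $gh=g'h'$ are immediate), but in both the short--long and long--long sub-cases you only describe the bookkeeping that ``one shows,'' and you yourself flag the long--long case as the main obstacle without resolving it. Nothing in the proposal exhibits the cancellation pattern in $ghg^{-1}=q_1^{-1}s_1(q_1q_2^{-1})s_1(q_2q_1^{-1})s_1^{-1}q_1$ or quantifies the drop in length. Since that estimate is the entire point of the lemma, the proposal as written is a proof plan rather than a proof: either the computation must be carried out (heavy cancellation at the $q_1q_2^{-1}$ juncture forces $q_2$ to end in a word closely tied to $q_1$, after which one checks that $ghg^{-1}$ is conjugation of $s_1$ by a strictly shorter word, as in the example above where $ghg^{-1}=s_0$), or one should do what the paper does and simply cite \cite{Friedman-Morgan}.
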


Notice that in the previous proof, the pair $(g',h')$ is obtained from the pair $(g,h)$ 
by performing one Hurwitz move.

Using the previous lemma we can prove that a product $g_{1}\cdots g_{r}$ of
conjugates of $s_{1}$ can always be transformed by applying a finite number of Hurwitz moves into a product $g_{1}^{\prime
}\cdots g_{r}^{\prime}$ of conjugates of $s_1$ in which each pair of consecutive
terms joins well. Notice that if $g_1'\ldots g_s'$ is obtained from $g_1\ldots g_r$ by applying a finite number of Hurwitz moves, then $s=r$, $g_1'\ldots g_s'=g_1\ldots g_r$ and $\{C(g_1'),\ldots,C(g_r')\}=\{C(g_1),\ldots,C(g_r)\}$ where $C(g)$ denotes the conjugacy
class of $g$.

\begin{proposition}
\label{Key}Let $g_{1}\ldots g_{r}$ be a product of $r$ conjugates of 
$s_{1}$. Then after a finite number of Hurwitz moves one can obtain a new
product $g_{1}^{\prime }\ldots g_{r}^{\prime }$ of conjugates of $s_1$,  
such that
either they are all short, or any pair of consecutive factors 
$g_{i}^{\prime}g_{i+1}^{\prime}$ join well.
\end{proposition}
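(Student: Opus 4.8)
The plan is to induct on the total length $\sum_{i=1}^{r} l(g_i)$ of the product. If every $g_i$ is short we are already done, so assume some $g_i$ is long. If every consecutive pair $g_i'g_{i+1}'$ already joins well in the current product, we are done as well; otherwise pick an index $i$ such that $g_i g_{i+1}$ does not join well, i.e. $l(g_ig_{i+1})<\max(l(g_i),l(g_{i+1}))$. I first want to observe that at least one of $g_i,g_{i+1}$ must then be long: by Lemma \ref{largo} two conjugates of $s_1$ that are both short always join well (indeed the only cancelling short pair is $s_2s_0$, and a direct check shows $l(s_2s_0)\ge\max(l(s_2),l(s_0))=2$). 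Hence the hypothesis of Lemma \ref{fridman} is met for the pair $(g_i,g_{i+1})$.

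\textbf{The inductive step.} Apply Lemma \ref{fridman} to the pair $(g_i,g_{i+1})$: it produces a single Hurwitz move replacing $(g_i,g_{i+1})$ by $(g_i',g_{i+1}')$ with $g_i'g_{i+1}'=g_ig_{i+1}$ and $l(g_i')+l(g_{i+1}')<l(g_i)+l(g_{i+1})$, while all other factors are untouched. Therefore the new product $g_1\ldots g_{i-1}g_i'g_{i+1}'g_{i+2}\ldots g_r$ is obtained from the old one by one Hurwitz move, represents the same element, and has strictly smaller total length. By the induction hypothesis this shorter product can be transformed by finitely many further Hurwitz moves into one of the desired form (all short, or all consecutive pairs joining well). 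Prepending the single move just performed gives the required finite sequence of Hurwitz moves for the original product. The base case of the induction is when the total length is minimal among all products obtainable by Hurwitz moves; there no move of the above type can decrease the length, so either all factors are short or every consecutive pair joins well — which is exactly the conclusion. (One should note the remark made just before the proposition: Hurwitz moves preserve the number of factors, the product, and the multiset of conjugacy classes, so ``conjugates of $s_1$'' is preserved throughout.)

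\textbf{Main obstacle.} The only subtle point is ensuring the induction actually terminates, i.e. that the total length is bounded below over the (infinite) orbit of the product under Hurwitz moves — but this is automatic since lengths are nonnegative integers, so a strictly decreasing sequence of total lengths must be finite. A secondary point worth spelling out is the claim that a non-well-joining pair forces at least one long factor; this is where Lemma \ref{largo}(2) is used, via its assertion that if either of two well... rather, its contrapositive: if both factors are short they are forced to be $s_2,s_0$ and one checks these join well, so a pair that fails to join well cannot consist of two short conjugates. Everything else is a bookkeeping argument about concatenating finite sequences of Hurwitz moves, which I would not belabor.
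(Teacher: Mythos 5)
Your inductive step rests on the assertion that a consecutive pair which fails to join well must contain a long factor, so that Lemma~\ref{fridman} applies and strictly decreases the total length. That assertion is false, and the ``direct check'' you cite is miscalculated: $s_2s_0=b^2w\,wb^2=b^4=b$, which has length $1<2=\max(l(s_2),l(s_0))$, so $s_2s_0$ does \emph{not} join well; the same happens for $s_0s_1$ and $s_1s_2$ (each product equals $b$), exactly as recorded at the start of the proof of Proposition~\ref{short}. You have also read Lemma~\ref{largo} backwards: its hypothesis is that the pair joins well, so its conclusion (that a cancelling pair with a short member must be $s_2,s_0$) says nothing about pairs that fail to join well. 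Consequently your descent gets stuck at a product containing a long factor somewhere and, elsewhere, a non-well-joining pair of short factors: such a product satisfies neither alternative of the conclusion, Lemma~\ref{fridman} is inapplicable to the offending pair (indeed every conjugate of $s_1$ has length at least $2$, so no factorization $b=g'h'$ into two conjugates of $s_1$ has $l(g')+l(h')<4$, and no length-decreasing move exists there at all), and your base case (``at a length-minimal product the conclusion holds'') fails for the same reason.

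The paper's argument is, like yours, a descent on $\sum_i l(g_i')$ (phrased as choosing a length-minimizing product in the Hurwitz orbit), but it supplies the missing ingredient, Proposition~\ref{short}: a run of consecutive short factors containing a non-well-joining pair can be rearranged by Hurwitz moves, without leaving the set of short conjugates, so that its last (or first) element is any prescribed element of $\{s_0,s_1,s_2\}$. If the minimizing product contains a long factor $g_k'$ adjacent to such a run, one chooses that boundary element to clash with the reduced expression of $g_k'$, producing a short--long pair that does not join well, to which Lemma~\ref{fridman} \emph{does} apply, contradicting minimality; hence in the presence of a long factor every consecutive short pair must already join well. This is the step your proof needs and does not contain.
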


Before proving this proposition we need to know how to handle pairs of consecutive short conjugates of $s_1$ that do not join well.

\begin{proposition}
\label{short} Let $p=s_{i_{1}}s_{i_{2}}\cdots s_{i_{l}}$ with $l\geq2$ be a product of
short conjugates of $s_1$, where there is at least one pair of consecutive terms
that do not join well. Then after a finite number of Hurwitz moves, $p$ can
be written as a product $s_{j_{1}}s_{j_{2}}\cdots s_{j_{l}}$ of short 
conjugates of $s_1$ (with the
same number of terms) where $s_{j_{1}}$ can be chosen
arbitrarily from the set $\{s_{0},s_{1},s_{2}\}$. In a similar way, $
s_{i_{1}}s_{i_{2}}\cdots s_{i_{l}}$ can be transformed by applying a finite number of Hurwitz moves into another product of short
conjugates of $s_1$ with the same number of terms, where the last conjugate can be
chosen arbitrarily.
\end{proposition}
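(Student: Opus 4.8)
The plan is to work entirely inside $G=PSL(2,\mathbb{Z})=\langle w,b\mid w^{2}=b^{3}=1\rangle$ and to exploit a very rigid combinatorial structure among the three short conjugates $s_{0}=wb^{2}$, $s_{1}=bwb$, $s_{2}=b^{2}w$. First I would carry out the finite computation identifying which ordered pairs of short conjugates fail to join well: checking the nine products $s_{i}s_{j}$ shows that the only pairs with $l(s_{i}s_{j})<\max(l(s_{i}),l(s_{j}))$ are $(s_{0},s_{1})$, $(s_{1},s_{2})$, $(s_{2},s_{0})$, and that in each of these three cases $s_{i}s_{j}=b$. I will call these the \emph{bad pairs}. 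Note they are exactly the pairs $(s_{i},\sigma(s_{i}))$ where $\sigma$ is the $3$-cycle $s_{0}\mapsto s_{1}\mapsto s_{2}\mapsto s_{0}$, that $\sigma$ is realized by conjugation, $\sigma(g)=b^{-2}gb^{2}$ (so $\sigma^{-1}(g)=c_{b}(g)=b^{-1}gb$), and that the set of bad pairs is $\sigma$-invariant. In particular the hypothesis of the proposition guarantees that the product $p$ has at least one bad pair among its consecutive factors.

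Next I would establish two elementary facts. (i) \emph{Any bad pair can be turned into any other bad pair by a single Hurwitz move.} It suffices to compute $(s_{0},s_{1})\to(s_{1},s_{1}^{-1}s_{0}s_{1})=(s_{1},s_{2})$ and $(s_{0},s_{1})\to(s_{0}s_{1}s_{0}^{-1},s_{0})=(s_{2},s_{0})$; the remaining cases follow because the notion of Hurwitz move and the set of bad pairs are both equivariant under conjugation by $b^{2}$, i.e.\ under $\sigma$. Since every short conjugate $s_{c}$ is the first entry of exactly one bad pair, (i) yields: if the first two factors of a product of short conjugates form a bad pair, then after at most one Hurwitz move the first factor becomes any prescribed $s_{c}$. (ii) \emph{A bad pair can be propagated one step to the left.} Suppose the factors in positions $j,j+1$ of a product of short conjugates form a bad pair and $j\ge 2$; write $s_{a}$ for the factor in position $j-1$. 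Using (i), apply Hurwitz moves to positions $j,j+1$ so that the bad pair there becomes $(s_{\sigma(a)},s_{\sigma^{2}(a)})$; these moves do not touch position $j-1$, keep all factors short, and (like all Hurwitz moves) preserve the product. The three consecutive factors in positions $j-1,j,j+1$ are now $s_{a}\cdot s_{\sigma(a)}\cdot s_{\sigma^{2}(a)}$, and since $\sigma^{3}=\mathrm{id}$ the pair $(s_{a},s_{\sigma(a)})$ in positions $j-1,j$ is again a bad pair. The mirror statement — propagation one step to the right — is proved identically: if positions $j,j+1$ form a bad pair and $j+1\le l-1$, write $s_{d}$ for the factor in position $j+2$, cycle the bad pair in positions $j,j+1$ to $(s_{\sigma(d)},s_{\sigma^{2}(d)})$, and observe that then $(s_{\sigma^{2}(d)},s_{d})$ is a bad pair in positions $j+1,j+2$.

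Finally I would assemble the argument. Starting from a bad pair located at positions $(j_{0},j_{0}+1)$ of $p$, iterate the left-propagation step (ii) exactly $j_{0}-1$ times to move a bad pair to positions $(1,2)$, and then invoke (i) to make the first factor equal to any prescribed element of $\{s_{0},s_{1},s_{2}\}$; every intermediate product consists of short conjugates of $s_{1}$ and has exactly $l$ terms, so this proves the first assertion. The statement about the last factor follows in the same way, using right-propagation to bring a bad pair to positions $(l-1,l)$ and then applying the last-factor version of (i).

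I expect the only real content to be the first step — pinning down that ``does not join well'' means precisely ``the product equals $b$'' and that conjugation by $b^{2}$ cyclically permutes $s_{0},s_{1},s_{2}$. Once that is in hand the propagation trick is forced: the naive attempt to push a bad pair leftward by a single Hurwitz move acting on positions $j-1,j$ fails (it changes positions $j-1,j$ into a bad pair only when they already were one), and the fix is precisely to act instead on positions $j,j+1$ and use that $\sigma$ has order $3$ so that $s_{\sigma^{2}(a)}=s_{\sigma^{-1}(a)}$ lands back among the short conjugates.
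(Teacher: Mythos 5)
Your proposal is correct and is essentially the paper's argument: both rest on the same base computation (the only bad pairs are $(s_0,s_1)$, $(s_1,s_2)$, $(s_2,s_0)$, each with product $b$ and each convertible into either of the others by one Hurwitz move), and your left/right propagation step is exactly the unrolled form of the paper's induction on $l$, which retargets the first entry of a bad pair so as to create a new bad pair one position closer to the end being adjusted. The only cosmetic difference is your use of the conjugation-by-$b^{2}$ symmetry $\sigma$ to organize the case checking, which the paper does by listing the three cases explicitly.
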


\begin{proof}
We use induction on $l$. For $l=2$ a direct computation shows that the pairs 
$s_{2}s_{0}$, $s_{0}s_{1}$, and $s_{1}s_{2}$ are the only ones that do not join
well, and that each product equals $b$. From this the claim follows by
noticing that each product can be changed into any other by a Hurwitz move:
$s_{2}s_{0}=s_{0}(s_{0}^{-1}s_{2}s_{0})=s_0s_1$, 
$s_{0}s_{1}=s_{1}(s_{1}^{-1}s_{0}s_{1})=s_{1}s_{2},$ and $
s_{1}s_{2}=s_{2}(s_{2}^{-1}s_{1}s_{2})=s_{2}s_{0}.$\\

Now let $l>2$. If the first pair does not join well, the same argument as
before could be applied. Hence, we may assume that there is a consecutive pair in the product $s_{i_{2}}\cdots s_{i_{l}}$ which does
not join well.  Then, by induction
we can change this product by a new product $s_{j_{2}}\cdots s_{j_{l}},$
where $s_{j_{2}}$ can be made to be any short conjugate. Consequently, if $
s_{i_{1}}$ is $s_{0}$ (resp. $s_{1},s_{2}$) then we may choose $
s_{j_{2}}$ to be $s_{1}$ (resp. $s_{2},s_{0}$) so that the first
pair does not join well and therefore can be changed again by a pair whose
first term can be chosen arbitrarily. \\

The proof of the second part is analogous.
\end{proof}\\

\begin{proof}[Proof of Proposition \protect\ref{Key}]
Among all products $g_{1}^{\prime }\ldots g_{r}^{\prime
}$ obtained by Hurwitz moves from the product $g_{1}\ldots g_{r}$
we may choose one such that the sum $\sum\nolimits_{i=1}^{r}l(g_{i}^{\prime
})$ is as small as possible. If all $g_{i}^{\prime }$ are short we are done.
If not, any $g_{i}^{\prime }$ which is long has to join well with any
term (if any) before or after it, for otherwise, by Lemma \ref{fridman}, the
corresponding pair could be transformed by a Hurwitz move into another one making the sum 
$\sum\nolimits_{i=1}^{r}l(g_{i}^{\prime })$ smaller.

On the other hand, let $s_{i_{1}}\cdots s_{i_{l}}$ be a product of
consecutive short conjugates that appears in $g_{1}^{\prime }\cdots
g_{r}^{\prime }$. If this product precedes a long conjugate $g_{k}^{\prime}$,
i.e., if $s_{i_{1}}\cdots s_{i_{l}}g_{k}^{\prime }$ is a segment of the
product $g_{1}^{\prime }\cdots g_{r}^{\prime }$, then, by the previous lemma,
either any pair of consecutive elements of $s_{i_{1}}\cdots s_{i_{l}}$ joins well or
we can transform this product via Hurwitz moves into $s_{j_{1}}s_{j_{2}}\cdots s_{j_{l}},$ where $
s_{j_{l}}$ can be chosen arbitrarily. If the reduced expression of $
g_{k}^{\prime }$ is of the form $wb^{e}t_{3}\cdots t_{m},$ $e=1$ or $2$, we may
choose $s_{j_{l}}=s_{2}$ so that $s_{j_{l}}$ and $g_{k}^{\prime }$ do not
join well. In this situation Lemma \ref{fridman} guarantees that by applying one Hurwitz
move we would obtain a new product whose length sum is smaller than that of $g_1'\ldots g_r'$. But this is a contradiction.  Similarly, if the
reduced expression of $g_{k}^{\prime }$ is of the form $b^{e}wt_{3}\cdots
t_{m}$, with $e=1$ or $2$, then if $e=1$ (resp. $e=2$), we could choose $s_{j_{l}}$ to be $s_{2}$ (resp. $s_1$) so that $
s_{j_{l}}$ and $g_{k}^{\prime }$ do not join well. As before, this leads to a contradiction. We conclude that any pair of consecutive
elements of $s_{i_{1}}\cdots s_{i_{l}}$ must join well. The argument is
essentially the same in case $g_{k}^{\prime }s_{i_{1}}\cdots s_{i_{l}}$ is a segment
of the product $g_{1}^{\prime }\cdots g_{r}^{\prime}$. Thus, we see that if
a product $g_{1}^{\prime }\ldots g_{r}^{\prime }$ obtained from $g_1\ldots g_r$ via Hurwitz moves and minimizing the sum 
$\sum\nolimits_{i=1}^{r}l(g_{i}^{\prime })$ among such products, contains at least one long
conjugate then all consecutive pairs in it must join well. This proves the
proposition.
\end{proof}\\

Let us define the \emph{left end} of a conjugate $g$ of $s_1$, denoted by $\mathrm{left}(g)$, as
follows: If $g$ is long of the form $g=q^{-1}s_{1}q$, define $\mathrm{left}%
(g)=q^{-1}s_{1}.$ If $g$ is $s_{0}=wb^{2},s_{1}=bwb,$ or $s_{2}=b^{2}w,$ we
define its left end as $w,b,b^{2},$ respectively.

\begin{lemma}
\label{left}If in a product $p=g_{1}\cdots g_{r}$ of conjugates of $s_1$ all pairs of
consecutive factors join well, then the reduced expression of 
 $p$ is of the form $\mathrm{left}(g_{1})t_{1}\cdots t_{l}$, where each 
 $t_{i}$ is one of $b$, $b^{2}$ or $w$.
\end{lemma}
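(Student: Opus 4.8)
The plan is to prove Lemma~\ref{left} by induction on the number $r$ of factors, exploiting the structural description of products of conjugates of $s_1$ that join well provided by Lemma~\ref{largo}. The base case $r=1$ is immediate: if $g_1$ is long of the form $q^{-1}s_1q$ then its reduced expression is $\mathrm{left}(g_1)\,q = q^{-1}s_1q$, and $q$ is a reduced word in $b$, $b^2$, $w$; if $g_1$ is short, say $s_1=bwb$, then $\mathrm{left}(g_1)=b$ and the remaining part is $wb$, again of the required form (similarly for $s_0=wb^2$ and $s_2=b^2w$).

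For the inductive step, assume the claim holds for $p' = g_2\cdots g_r$, so its reduced expression is $\mathrm{left}(g_2)\,t_1\cdots t_l$ with each $t_i\in\{b,b^2,w\}$. We must compute $g_1 p'$ and identify its reduced expression. First I would write the reduced expression of $g_1$ as $\mathrm{left}(g_1)\,w_1$ where $w_1$ is a reduced word in $b,b^2,w$ (this is exactly the decomposition in the definition of $\mathrm{left}$: $w_1 = q$ when $g_1=q^{-1}s_1q$, and $w_1\in\{b^2,b,w\}$ in the three short cases). Now $g_1 p' = \mathrm{left}(g_1)\,w_1\cdot p'$, and since $g_1$ and the product $g_2\cdots g_r$ join well — more precisely, since all consecutive pairs join well, $g_1$ joins well with $g_2\cdots g_r$ viewed appropriately — I would invoke Lemma~\ref{largo} applied to the pair $g_1$ and $g_2\cdots g_r$ (noting $g_2\cdots g_r$ need not be a conjugate of $s_1$, so some care is needed here — see the obstacle below). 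Either no cancellation occurs at the juncture of $g_1$ and $p'$, in which case the reduced expression of $g_1p'$ is literally $\mathrm{left}(g_1)\,w_1\,\mathrm{left}(g_2)\,t_1\cdots t_l$, and stripping off $\mathrm{left}(g_1)$ leaves a word entirely in $b,b^2,w$ — done. Or cancellation does occur, and by the analysis in~\eqref{F0} and Lemma~\ref{largo} the cancellation is confined strictly inside the $w_1\cdot(\text{rest of }p')$ portion and leaves $\mathrm{left}(g_1)$ untouched (because $m<\min(k,l)$ forces at least one surviving syllable of $g_1$ on the left, and $\mathrm{left}(g_1)$ is precisely $g_1$ with its rightmost $l(w_1)$ syllables removed, while $m\le l(w_1)$ by the improved inequality $m\le\min(l(q_1),l(q_2))$ in the long case and by direct check in the short cases). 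In all cases the surviving word after $\mathrm{left}(g_1)$ is built from $b$, $b^2$, $w$ only, since the only nontrivial thing that can happen at a cancellation juncture by~\eqref{F0} is the replacement of two syllables by a single power $b^e$ — still a letter from $\{b,b^2\}$.

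The main obstacle is handling the fact that $g_2\cdots g_r$ is not itself a conjugate of $s_1$, so Lemma~\ref{largo} does not apply verbatim to the pair $(g_1,\,g_2\cdots g_r)$. The cleanest fix is to carry out the induction keeping track of the stronger statement that the reduced expression of $p=g_1\cdots g_r$ begins with $\mathrm{left}(g_1)$ followed by a word in $b,b^2,w$, \emph{and} that the juncture between $g_1$'s surviving syllables and the rest behaves exactly as in~\eqref{F0}; then when one prepends a new joining-well factor $g_0$, one only needs to understand the interaction of $g_0$ with the prefix $\mathrm{left}(g_1)$ of $p$, and since $\mathrm{left}(g_1)$ is (a syllable-extension of) the left end of the \emph{conjugate} $g_1$, one can reduce to the genuine two-conjugate case of Lemma~\ref{largo} by a short syllable-level bookkeeping argument. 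Alternatively, one can induct from the right, peeling off $g_r$, and observe that $\mathrm{left}(g_1)$ sits at the far left and is never reached by any cancellation occurring at the rightmost juncture — this is arguably the smoother route and is the one I would write up, reducing everything to iterated application of Lemma~\ref{largo} to genuine pairs of conjugates of $s_1$.
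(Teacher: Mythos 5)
Your first route is essentially the paper's proof: induct on $r$, peel $g_{1}$ off the left, and resolve the mismatch (Lemma \ref{largo} speaks only of \emph{pairs of conjugates}) by noting that the inductive hypothesis makes the reduced expression of $g_{2}\cdots g_{r}$ begin with $\mathrm{left}(g_{2})$, a prefix it shares with $g_{2}$ itself and which is strictly longer than the possible cancellation depth at the juncture ($m\leq\min(l(q_{1}),l(q_{2}))$ in the long--long case, and an explicit check in the cases where $g_{1}$ or $g_{2}$ is short). Hence the juncture of $g_{1}$ with $g_{2}\cdots g_{r}$ behaves exactly like the juncture of $g_{1}$ with $g_{2}$, the surviving prefix is $\mathrm{left}(g_{1})$, and the only new phenomenon is the amalgamation of two $b$--powers into a single $b^{e}$ as in (\ref{F0}). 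This is precisely the paper's argument, carried out case by case according to whether $g_{1}$ is $s_{0}$, $s_{1}$, $s_{2}$ or long; no strengthening of the induction hypothesis is needed, since the statement of Lemma \ref{left} applied to $g_{2}\cdots g_{r}$ already supplies the prefix $\mathrm{left}(g_{2})$ that controls the juncture.

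The one genuine problem is with the alternative you say you would actually write up, namely inducting from the right by peeling off $g_{r}$. The inductive hypothesis for $g_{1}\cdots g_{r-1}$ controls only the \emph{left} end of its reduced expression and says nothing about its right end, so nothing bounds the depth of cancellation at the juncture with $g_{r}$. The hypothesis that $g_{r-1}$ and $g_{r}$ join well bounds the cancellation in $g_{r-1}g_{r}$, but to transfer that bound to $(g_{1}\cdots g_{r-1})g_{r}$ you must know that the reduced expression of $g_{1}\cdots g_{r-1}$ ends with the mirror image $\mathrm{right}(g_{r-1})$ of the left end; that is a second statement which would have to be carried through the induction simultaneously. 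Your remark that $\mathrm{left}(g_{1})$ ``is never reached by any cancellation occurring at the rightmost juncture'' is exactly the assertion requiring proof, not an observation. The left-to-right induction is the self-contained one, because the quantity being controlled ($\mathrm{left}$ of the first factor) sits on the same side as the factor being peeled off; stick with that version.
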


\begin{proof}
We prove this by induction on $r$, the assertion being trivial for $r=1$. We
distinguish several cases.

\begin{enumerate}
\item $g_{1}=s_{0}$. Since $g_{1}$ and $g_{2}$ join well, Lemma \ref{largo} implies that
either $g_2$ is short or no cancellation occurs when the product $g_1g_2$ is calculated. If $g_{2}$ is short, then it should be equal to $s_{0}$ or to $s_{2}$. If $g_2=s_0$, by the induction hypothesis, we must have that the reduced expression
for $g_{2}\cdots g_{r}$ is of the form $wt_{1}\cdots t_{k}$, and
consequently $g_{1}\cdots g_{r}=wb^{2}wt_{1}\cdots t_{k}=\mathrm{left}
(s_{0})t_{1}^{\prime }\cdots t_{l}^{\prime }$. On the
other hand, if $g_{2}=s_{2}$ then $g_{2}\cdots
g_{r}=b^{2}t_{1}\cdots t_{k}$ and the result also holds, since $g_{1}\cdots g_{r}=wbt_{1}\cdots t_{k}=\mathrm{left}(s_{0})t_{1}^{\prime
}\cdots t_{l}^{\prime }.$
It only rests to consider the case in which no cancellation occurs when $g_1g_2$ is calculated.  By the induction hypothesis, we know that the reduced expression of $g_2\ldots g_r$ has the form $\text{left}(g_2)t_1\ldots t_k$. On the other hand, when $g_1\text{left}(g_2)$ is calculated no cancellation occurs. We conclude that the reduced expression of $g_1\ldots g_r$ is of the form $\text{left}(g_1)t_1'\ldots t_l'$, and the results holds.

\item If $g_{1}=s_{2}$ or $g_{1}=s_1$. In these cases the argument is exactly the same as in the previous case.

\item $g_{1}$ is long. By Lemma \ref{largo}, either $g_2$ is short and no cancellation occurs when $g_1g_2$ is calculated, or $g_2$ is long.  In the first case, by the induction hypothesis, $g_2\ldots g_r=\text{left}(g_2)t_1\ldots t_k$. On the other hand, since no cancellation occurs when $g_1g_2$ is calculated we have that no cancellation occurs when $g_1\text{left}(g_2)$  is calculated.  We conclude that the reduced expression of $g_1\ldots g_r$ is of the form $\text{left}(g_1)t_1'\ldots t_l'$ and the result holds.
Let us assume now that $g_{2}$ is long. If $g_{1}=q_{1}^{-1}s_{1}q_{1}$
and $g_{2}=q_{2}^{-1}s_{1}q_{2}$ then, by Lemma \ref{largo}, either no
cancellation occurs, or the number of terms that cancel out in the product $
g_{1}g_{2}$ is $\leq \min (l(q_{1}),l(q_{2}))$. By induction $g_{2}\cdots
g_{r}=q_{2}^{-1}s_{1}t_{1}\cdots t_{k},$ and in either case the reduced
expression of $g_{1}\cdots g_{r}$ starts with $q_{1}^{-1}s_{1}.$
\end{enumerate}
\end{proof}

\subsection{Uniqueness of factorization results}

Let us set 
\begin{equation*}
W=\left[ 
\begin{array}{rc}
0 & 1 \\ 
-1 & 0
\end{array}
\right] ,
\end{equation*}
and 
\begin{equation*}
V=W^{-1}UW=-WUW=\left[
\begin{array}{rr}
1 & 0 \\ 
-1 & 1
\end{array}
\right].
\end{equation*}

The last column of Table \ref{primeratabla} contains a particular factorization of the corresponding monodromy matrix in term of conjugates of $U$ ($V$ is a conjugate of $U$). This
factorization will be called the \emph{minimal normal factorization} (which we will abreviate as m.n.f.) of the corresponding matrix. In this section we intend to prove the
following theorem.

\begin{theorem}\label{Main}
Let $M$ be one of the matrices in Table \ref{primeratabla}. If 
$M=G_{1}\cdots G_{r}$ is a factorization of $M$ in terms of conjugates of $U$ in $SL(2,\mathbb{Z})$,
then $r$
is greater than or equal to the number of factors in the m.n.f. of $M$. 
Moreover, if $n$ is such number then after a finite
number of Hurwitz moves the product $G_1\ldots G_r$ transforms 
into a product $C_{1}\cdots C_{n}D_{n+1}\cdots D_{r}$ where

\begin{itemize}
\item in cases $wI_n-IV$, $C_{1}\cdots C_{n}$ is the m.n.f. of 
$M$ and $D_{n+1}\cdots D_{r}$ is equal to the identity matrix 
$Id_{2\times 2},$ and

\item in cases $I^{\ast}_n-IV^{\ast}$, $C_{1}\cdots C_{n}$ is the m.n.f. of $-M$ and $D_{n+1}\cdots D_{r}$ is equal to 
$-Id_{2\times 2}.$
\end{itemize}
\end{theorem}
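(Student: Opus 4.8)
The plan is to transport the entire problem from $SL(2,\mathbb{Z})$ into $PSL(2,\mathbb{Z})\cong \langle w,b\mid w^2=b^3=1\rangle$ via the automorphism $\rho=\varphi\circ c_b$ introduced above, which carries conjugates of $u$ to conjugates of $s_1=bwb$ and sends the pair $(u,v)$ to $(s_1,s_0)$. First I would record that $\rho$ turns the m.n.f.\ of each Kodaira matrix $M$ (a word in $U,V$) into a word in $s_0,s_1$, and hence into a specific reduced word in $w,b$ via Lemma~\ref{left}; in particular I would compute, for each type in Table~\ref{primeratabla}, the reduced expression of the image of $M$ in $G$ and read off its length $l$. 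Because Hurwitz moves are compatible with $\rho$ and with passing to $PSL(2,\mathbb{Z})$, any factorization $M=G_1\cdots G_r$ into conjugates of $U$ descends to a factorization $\bar M = h_1\cdots h_r$ into conjugates of $s_1$, where $\bar M$ denotes the image of $M$ in $G$.

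Next I would apply Proposition~\ref{Key}: after finitely many Hurwitz moves the product $h_1\cdots h_r$ becomes a product $h_1'\cdots h_r'$ of conjugates of $s_1$ which is either all short or has every consecutive pair joining well. In the "join well" case, Lemma~\ref{left} forces the reduced expression of $\bar M$ to be $\mathrm{left}(h_1')t_1\cdots t_l$ with $l$ equal to the total length contributed; since each long conjugate contributes length $\geq 5$ and each short one length $\geq 2$ (more precisely, a product of $r'$ conjugates in which all consecutive pairs join well and which has at least one long factor has reduced length growing with $r'$), one gets a lower bound $r'\geq$ (the number of $s_1$-factors needed to write $\bar M$), which unwinds to $r\geq n$. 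The all-short case is handled directly with Proposition~\ref{short}, which lets me normalize the initial segment of short conjugates so that it spells out exactly the m.n.f.\ pattern $s_1$ or $s_0 s_1$ etc., peeling off $C_1\cdots C_n$ and leaving a product of conjugates of $s_1$ equal to the identity in $G$ (i.e.\ to $\pm Id$ in $SL(2,\mathbb{Z})$); the Livne--Moishezon theorem then says $r-n$ is a multiple of $12$ and the tail can be brought to the standard form, which in $SL(2,\mathbb{Z})$ is $Id$ or $-Id$ according to parity — precisely the $wI_n$--$IV$ versus $I_n^*$--$IV^*$ dichotomy, since the m.n.f.\ of the starred types already absorbs one factor $(VU)^3=-Id$.

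The technical heart is the bookkeeping that shows the minimizing (shortest-total-length) product produced by Proposition~\ref{Key}, after normalizing its short segments via Proposition~\ref{short}, has its first $n$ factors forced to be exactly a copy of $\rho$(m.n.f.): one must check that the reduced word $\mathrm{left}(h_1')t_1\cdots t_l$ of $\bar M$, combined with the known reduced word of $\bar M$, pins down $h_1',\dots,h_n'$ uniquely up to the Hurwitz equivalence that Proposition~\ref{short} allows among short conjugates, and that whatever is left over multiplies to $id$ in $G$. Then I pull back along $\rho^{-1}$ and along the projection $SL(2,\mathbb{Z})\to PSL(2,\mathbb{Z})$: a factorization of $M$ into conjugates of $U$ mapping to $C_1\cdots C_n D_{n+1}\cdots D_r$ with $D_{n+1}\cdots D_r$ projecting to $id$ must have $D_{n+1}\cdots D_r=\pm Id$, and a separate sign count (using that the abelianization of $SL(2,\mathbb{Z})$ is $\mathbb{Z}_{12}$ with every conjugate of $U$ mapping to $1$, so $r$ is determined mod $12$ by $M$) fixes which sign occurs.

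The main obstacle I anticipate is the last normalization step: Lemma~\ref{left} only tells us the reduced form of the \emph{product}, not that the factorization itself is essentially unique, so I expect to need a careful induction — peeling one conjugate of $s_1$ off the left of $h_1'\cdots h_r'$ at a time, matching it against the known reduced word of $\bar M$, and using the "join well" inequalities of Lemma~\ref{largo} together with Proposition~\ref{short}'s freedom to slide short factors — to argue that after finitely many further Hurwitz moves the first $n$ factors become literally $\rho(C_1),\dots,\rho(C_n)$. Handling the interface between a trailing block of short conjugates and a following long one (the case analyzed in the proof of Proposition~\ref{Key}) will require the most care, since that is exactly where the length accounting is tight.
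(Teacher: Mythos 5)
Your overall strategy --- push everything into $PSL(2,\mathbb{Z})$ via $\rho$, normalize with Proposition~\ref{Key}, identify the leftmost factor with Lemma~\ref{left} and Proposition~\ref{short}, then lift back to $SL(2,\mathbb{Z})$ using the fact that a conjugate of $U$ and its negative are distinguished (by trace, or by the abelianization $\mathbb{Z}_{12}$) --- is exactly the paper's. But there are two genuine gaps. First, your length-accounting argument for the bound $r\geq n$ in the join-well case cannot work: for types $II$, $III$, $IV$ (and their starred companions) the reduced word of the image of $M$ is much shorter than $n$ --- e.g.\ $\rho(vu)=s_0s_1=wb^2\cdot bwb=b$ has length $1$ while $n=2$, and $\rho((vu)^2)=b^2$ has length $1$ while $n=4$ --- so no lower bound on the number of factors can be read off from the word length of the product. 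In the paper $r\geq n$ is not obtained by counting letters at all; it falls out of the peeling induction, which removes one factor from each side of the equation at every step.

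Second, the step you label ``the technical heart'' and ``the main obstacle I anticipate'' is in fact the entire body of the paper's proof, and you have not carried it out. The paper proves the $PSL(2,\mathbb{Z})$-level claim by a separate induction for each of the four shapes $u^n$, $vu$, $vuv$, $(vu)^2$: in the join-well case it compares $\mathrm{left}(g_1')$ against the explicit reduced words $b(wb^2)^{n-1}wb$, $b$, $bwb^2$, $b^2$ to force $g_1'$ to be one specific short conjugate (a long $g_1'$ is excluded because $\mathrm{left}(q^{-1}s_1q)=q^{-1}s_1$ ends in $bwb$, which never occurs in these words), and in the all-short case it uses Proposition~\ref{short} to prescribe one end factor. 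Note that Proposition~\ref{short} only controls a single end factor, not ``the initial segment\ldots spelling out exactly the m.n.f.\ pattern'' as you claim; after cancelling that one factor one must recurse into the previously established case and then use the identities $s_0s_1=s_1s_2=s_2s_0=b$, together with a Hurwitz move conjugating by the tail (which equals the identity), to reassemble the m.n.f.\ at the front. Finally, the appeal to Livne--Moishezon is not needed for Theorem~\ref{Main} (it is what upgrades it to Theorem~\ref{fuerte}), and as stated it applies to factorizations of $Id_{2\times 2}$, not of $-Id_{2\times 2}$, so it cannot by itself settle the sign of the tail in the starred cases; the paper instead gets that sign for free from $M=(-M)(G_{n+1}'\cdots G_r')$.
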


For instance, for 
\begin{equation*}
M=\left[ 
\begin{array}{rr}
0 & -1 \\ 
1 & 0
\end{array}
\right] \text{, \ }(\text{case }III^{\ast })
\end{equation*}
any factorization $M=G_{1}\cdots G_{r}$ can be transformed using Hurwitz moves into 
$M=VUVD_{4}\cdots D_{r}$ where $D_{4}\cdots D_{r}=-Id_{2\times
2}$. By a well known theorem of Moishezon \cite{Moishezon} we also know
that any factorization of the identity in terms of conjugates of $U$, can be transformed using Hurwitz moves into a product of the form $(VU)^{6s}$ with $s\geq 0$,  from which we can strengthen the theorem above as follows.

\begin{theorem}
\label{fuerte}Let $M$ be a matrix that corresponds to the monodromy of a
singular fiber in an elliptic fibration. If $M=G_{1}\cdots G_{r}$ is a
factorization of $M$ in terms of conjugates of $U$, then $r$ is greater than or equal to
the number of factors in the m.n.f. of $M$. Moreover, if $n$
is such number, then after a finite number of Hurwitz moves the product $G_1\ldots G_r$ becomes the product $C_{1}\cdots C_{n}(VU)^{6s}$, 
where 
$C_{1}\cdots C_{n}$ is the m.n.f. of $M$ and $s=(r-n)/12$,
in cases $wI_n-IV$, and into $C_{1}\cdots C_{n}(VU)^{6s+3}$, where 
$C_{1}\cdots C_{n}$ is the m.n.f. of $-M$ and $s=(r-n-6)/12$
, in cases $I^{\ast}_n-IV^{\ast}.$
\end{theorem}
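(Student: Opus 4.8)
The plan is to deduce Theorem~\ref{fuerte} from Theorem~\ref{Main} together with the Livne--Moishezon theorem recalled in the Introduction. By Kodaira's classification (Theorem~\ref{Kodaira}), the conjugacy class of the monodromy around a singular fiber is one of the classes represented by the matrices $M_T$ of Table~\ref{primeratabla}, and since simultaneous conjugation of all factors commutes with Hurwitz moves it suffices to treat the case $M=M_T$, in which the statement is literally a sharpening of Theorem~\ref{Main}. The inequality $r\ge n$ is already part of Theorem~\ref{Main}, so only the normal form has to be produced.

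First I would apply Theorem~\ref{Main} to $M_T$: after finitely many Hurwitz moves $G_1\cdots G_r$ becomes $C_1\cdots C_nD_{n+1}\cdots D_r$, where $C_1\cdots C_n$ is the m.n.f. of $M_T$ and $D_{n+1}\cdots D_r=Id_{2\times 2}$ in cases $wI_n$--$IV$, while $C_1\cdots C_n$ is the m.n.f. of $-M_T$ and $D_{n+1}\cdots D_r=-Id_{2\times 2}$ in cases $I_n^*$--$IV^*$; here $n$ is the length of the prefix block, so $D_{n+1}\cdots D_r$ has $r-n$ factors. Two elementary observations reduce the problem to this tail: a Hurwitz move carries a pair of conjugates of $U$ to a pair of conjugates of $U$, so each $D_j$ is a conjugate of $U$ and $D_{n+1}\cdots D_r$ is an honest factorization of $Id_{2\times 2}$ (resp. $-Id_{2\times 2}$) into $r-n$ conjugates of $U$; and every Hurwitz move performed \emph{within} the contiguous block $D_{n+1}\cdots D_r$ is a Hurwitz move of the full length-$r$ product which leaves the prefix $C_1\cdots C_n$ fixed. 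Hence it is enough to bring the tail into standard form.

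In cases $wI_n$--$IV$ the tail is a factorization of $Id_{2\times 2}$ into $r-n$ conjugates of $U$, and the Livne--Moishezon theorem transforms it, by Hurwitz moves inside the block, into $(VU)^{6s}$ with $s\ge 0$ and $r-n=12s$; this gives the normal form $C_1\cdots C_n(VU)^{6s}$ with $s=(r-n)/12$. In cases $I_n^*$--$IV^*$ what is needed is the analogue of Livne--Moishezon for $-Id_{2\times 2}$: every factorization of $-Id_{2\times 2}$ into $m$ conjugates of $U$ is Hurwitz equivalent to $(VU)^{m/2}$, and necessarily $m\equiv 6\pmod{12}$. I would obtain this by passing to $PSL(2,\mathbb{Z})=\mathbb{Z}_2\ast\mathbb{Z}_3$, in which $-Id_{2\times 2}$ becomes the identity and the image of the tail is a factorization of $id$ into $m$ conjugates of $u$; the Livne--Moishezon result, in its form for $\mathbb{Z}_2\ast\mathbb{Z}_3$, turns this word, after Hurwitz moves, into $(vu)^{3k}$ with $m=6k$. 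Lifting this sequence of Hurwitz moves back to $SL(2,\mathbb{Z})$ starting from $D_{n+1}\cdots D_r$, each intermediate factor stays a conjugate of $U$; since $\pm V$ and $\pm U$ are the only lifts of $v$ and $u$ to $SL(2,\mathbb{Z})$ and the matrices $-V$, $-U$ have trace $-2$ and so are not conjugate to $U$, the endpoint word must be exactly $(VU)^{3k}$. Finally, by the elementary identity $(VU)^3=-Id_{2\times 2}$ one has $(VU)^{3k}=(-Id_{2\times 2})^k$, which must equal the value $-Id_{2\times 2}$ of the tail; hence $k$ is odd, $k=2s+1$, so that $(VU)^{3k}=(VU)^{6s+3}$ and $r-n=m=6k=12s+6$, i.e. $s=(r-n-6)/12$. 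This yields the normal form $C_1\cdots C_n(VU)^{6s+3}$.

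I expect this last step to be the main obstacle. The delicate point is the $PSL(2,\mathbb{Z})$ form of the Livne--Moishezon theorem (which is genuinely stronger than the $SL(2,\mathbb{Z})$ statement quoted, covering both parities of $k$) together with the passage back to $SL(2,\mathbb{Z})$: the naive shortcut of appending $VUVUVU=(VU)^3$ to the tail so that its product becomes $Id_{2\times 2}$ and then quoting Livne--Moishezon fails, because after the Hurwitz moves those six factors are in general scattered through the word and cannot be peeled off. Working inside $\mathbb{Z}_2\ast\mathbb{Z}_3$ --- which is where Livne's analysis and the proof of Theorem~\ref{Main} already take place --- and then using the trace obstruction to rigidify the lift, is the route I would take to make this precise; alternatively one can rerun the relevant part of the proof of Theorem~\ref{Main} to reach $(vu)^{3k}$ directly.
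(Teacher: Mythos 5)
Your proposal is correct and follows the route the paper itself gestures at: Theorem \ref{Main} peels off the m.n.f.\ as a prefix, leaving a contiguous tail of $r-n$ conjugates of $U$ whose product is $Id_{2\times 2}$ (resp.\ $-Id_{2\times 2}$), and since a Hurwitz move performed inside that block is a Hurwitz move of the whole word fixing the prefix, it remains only to normalize the tail. The paper disposes of this in a single sentence by citing the Livne--Moishezon normal form for factorizations of the \emph{identity}, which settles cases $wI_n$--$IV$ but --- as you rightly observe --- says nothing about the starred cases, where the tail equals $-Id_{2\times 2}$ and the quoted statement does not apply; the paper offers no argument there, and the naive trick of appending $(VU)^3$ indeed fails for the reason you give. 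Your repair (pass to $PSL(2,\mathbb{Z})$, normalize the image of the tail to $(vu)^{3k}$, lift back rigidly using the fact that $-U$ and $-V$ have trace $-2$ and hence cannot occur among conjugates of $U$, then read off the parity of $k$ from $(VU)^{3k}=(-Id_{2\times 2})^{k}=-Id_{2\times 2}$) is sound, and the trace/lifting device is exactly the one the paper already uses inside its proof of Theorem \ref{Main}, so it is consistent with the paper's toolkit. The one dependency you leave open is the $PSL(2,\mathbb{Z})$ (equivalently, the $-Id_{2\times 2}$) form of the Livne--Moishezon theorem, which is genuinely stronger than the version quoted in the paper; you flag this honestly, and since Livne's analysis is carried out in $\mathbb{Z}_2\ast\mathbb{Z}_3$ this is a matter of citing the right form of the result (or re-running the machinery of Propositions \ref{Key} and \ref{short} on products of short conjugates equal to the identity) rather than a logical gap. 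On this point your write-up is in fact more complete than the paper, which supplies no proof of the starred cases at all.
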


We now present the proof of Theorem \ref{Main}.\\

\begin{proof} 
We deal with cases $wI_n$-$IV$ first, and once we have
established these, a rather trivial argument takes care of the
remaining cases $I^{\ast }_n$-$IV^{\ast }$. In what follows,
$\pi:SL(2,\mathbb{Z})\rightarrow PSL(2,\mathbb{Z})$ will be the
canonical homomorphism.

\textbf{Claim}: \emph{Let $M$ be one of the matrices in cases $wI_n-IV$. Suppose that $m:=\pi(M)=g_1\ldots g_r$ is a factorization in terms of conjugates of $\pi(U)=u$. Then after a finite number of Hurwitz moves the product $
g_{1}\cdots g_{r}$ transforms into a new one of the form $
(g_{1}^{\prime }\cdots g_{n}^{\prime })(g_{n+1}^{\prime }\cdots
g_{r}^{\prime })$ where if $G_1'\ldots G_n'$ is the m.n.f. of $M$, then $g_i'=\pi(G_i')$ for $i=1,\ldots,n$, and 
$g_{n+1}^{\prime }\cdots g_{r}^{\prime }=\pi(Id_{2\times 2})$}. 

Assuming this claim we can prove cases $wI_n-IV$ of the theorem as follows.
A product $H_1\ldots H_s$ in $SL(2,\mathbb{Z})$ will be said to be a \emph{lift} of a product $k_1\ldots k_s$
in $PSL(2,\mathbb{Z})$ 
if $\pi(H_i)=k_i$ for each $i=1,\ldots,s$. It can be immediatly verified that 
if a product $H_1\ldots H_s$ is a lift of a product $k_1\ldots k_s$, then the product
$H_1'\ldots H_s'$ obtained by applying a Hurwitz move to $H_1\ldots H_s$, is a lift of the
product $k_1'\ldots k_s'$ obtained by applying the same Hurwitz move to 
the product $k_1\ldots k_s$. Let $M$ now be one of the matrices in cases $wI_n-IV$, 
and let $G_1\ldots G_r$ be
a factorization of $M$ in terms of conjugates of $U$. Let $m=g_1\ldots g_r$  where $m=\pi(M)$ and 
$g_i=\pi(G_i)$ for each $i=1,\ldots,r$. Since each $G_i$ is a conjugate of $U$, then each 
$g_i$ is a conjugate of $\pi(U)$. Also by definition the product $G_1\ldots G_r$ is a lift
of the product $g_1\ldots g_r$. The claim guarantees the existence of a finite sequence of 
Hurwitz moves which transforms $g_1\ldots g_r$ into a new product $(g_1'\ldots g_n')(g_{n+1}'\ldots
g_r')$. It follows that if $G_1'\ldots G_r'$ is the product obtained from $G_1\ldots G_r$
by applying the same sequence of Hurwitz moves, then $G_1'\ldots G_r'$ is a lift of $(g_1'\ldots g_n')(g_{n+1}'\ldots
g_r')$ where each $G_i$ is a conjugate of $U$. Now, by observing that
$U$ and $-U$ have different traces and therefore do not belong to the same 
conjugacy class, we conclude that $G_1'\ldots G_n'$ has to be the m.n.f. of $M$, and therefore
that $G_{n+1}'\ldots G_r'=Id_{2\times 2}$.

Cases $I^{\ast}_n-IV^{\ast}$ can be dealt with as follows. Let $M$ be one of the
matrices in cases $I^{\ast}_n-IV^{\ast}$. Suppose that $G_1\ldots G_r$ is a factorization of
$M$ in terms of conjugates of $U$.  By applying the homomorphism $\pi$ we obtain a factorization
$g_1\ldots g_r$ of $\pi(M)$ in terms of conjugates of $u=\pi(U)$. Since $\pi(M)=\pi(-M)$
and $-M$ is one of the matrices in cases $wI_n-IV$, we can apply the claim to $\pi(M)=g_1\ldots g_r$.
We conclude that there exists a sequence of Hurwitz moves which transforms the product
$g_1\ldots g_r$ into a product $(g_1'\ldots g_n')(g_{n+1}'\ldots g_r')$ where the m.n.f. of 
$-M$ is a lift of $g_1'\ldots g_n'$ and $g_{n+1}'\ldots g_r'=\pi(Id_{2 \times 2})$. Let 
$(G_1'\ldots G_n')(G_{n+1}'\ldots G_r')$ be the product obtained from $G_1\ldots G_r$ by applying the
same sequence of Hurwitz moves. By the observations made immediately after the claim we know that 
the facts that $G_1'\ldots G_n'$ is a lift of $g_1'\ldots g_n'$ and that each $G_i'$ is a conjugate of $U$ imply
that $G_1'\ldots G_n'$ has to be the m.n.f. of $-M$, and therefore that $G_{n+1}'\ldots G_r'=-Id_{2 \times 2}$
since $M=G_1'\ldots G_r'=(-M)(G_{n+1}'\ldots G_r')$. This finishes the proof of the
theorem.

In order to prove the claim we may first apply the automorphism 
$\rho$ (defined in Remark \ref{automorfismo}) and then prove the equivalent
claim for $\rho(m)$. Notice that after doing this the image of the
canonical factorization of $M$ becomes a factorization of $\rho(m)$ in
terms of the elements $\rho(u)=s_{1}$ and $\rho(v)=s_{0}$.
Now we prove the claim by analyzing each of the four possible cases.

Case 1: $m=u^{n}$ hence $\rho (m)=\rho (u)^{n}=s_{1}^{n}$. It suffices to prove that
for each $n\geq 0$, if $s_1^n=g_1\ldots g_r$ where each $g_i$ is a conjugate of
$s_1$, then $r\geq n$ and there exists a sequence of Hurwitz moves which transforms
the product $g_1\ldots g_r$ into a product $(g_1'\ldots g_n')(g_{n+1}'\ldots g_r')$ where
the m.n.f. of $M$ is a lift of $g_1'\ldots g_n'$ and $g_{n+1}'\ldots g_r'=\pi(Id_{2 \times 2})$.
In the case $n=0$ the m.n.f. of $Id_{2\times 2}$ is taken to an empty product.
We proceed by induction on $n$. The result is immediate when $n=0$. 
Let us suppose
that $s_{1}^{n}=g_{1}\cdots g_{r}$. By Proposition \ref{Key}, after applying a 
finite number of Hurwitz moves one arrives at a new product $s_{1}^{n}=g_{1}'\cdots g_{r}'$ 
in which either any pair of
consecutive $g_{i}$'s in this product join well or all factors are
short conjugates of $s_{1}$. 
In
the first case, by Lemma \ref{left} we know that the reduced expression of
this product must be of the form $\mathrm{left}(g_{1}')t_{1}\cdots t_{l}$. On
the other hand, $g_{1}'$ cannot be a long conjugate $q^{-1}s_{1}q$.
This is because the reduced expression of $s_{1}^{n}$ is $
b(wb^{2})^{n-1}wb$, the reduced expression of $\mathrm{left}
(q^{-1}s_{1}q)=q^{-1}s_{1}$ has the form $l_{1}\cdots l_{s}bwb$ and the
sequence $bwb$ does not appear in the reduced expression of $s_{1}^{n}$. In a
similar way, $g_{1}'$ cannot be $s_{0}$ or $s_{2}$ since $\mathrm{left}
(s_{0})=w$ and $\mathrm{left}(s_{2})=b^{2}$ but the reduced expression of $%
s_{1}^{n}$ starts with the element $b$. Hence $g_{1}'=s_{1},$ and we can
cancel out this element on both sides of $s_1^n=g_1'\ldots g_r'$ and apply the induction hypothesis to
obtain the result.

In the second case, i.e. when all the $g_{i}'$'s are short, we may
assume that there is at least one pair of consecutive elements that do not
join well, for otherwise we would be in the previous case. By Proposition \ref%
{short}, after a finite number of Hurwitz moves one arrives at a product $g_1''\ldots g_r''$ with 
$g_{1}''=s_{1}$. Canceling out this element in equation $s_1^n=g_1''\ldots g_r''$ and applying the induction hypothesis 
one obtains the result.

Case 2: $m=vu$, hence $\rho (m)=s_{0}s_{1}$. Let us suppose that $%
s_{0}s_{1}=g_{1}\cdots g_{r}$. Again, by Proposition \ref{Key}, after
a finite number of Hurwitz moves we arrive at a new product $s_0s_1=g_1'\ldots g_r'$ (e.1) in which either any pair of
consecutive $g_{i}'$'s join well or all factors are short conjugates of $s_{1}$. In
the first case, since $s_{0}s_{1}=b$, $g_{1}'$ can neither be long nor
equal to $s_{0}$ or $s_{2}$, for exactly the same reason as in the
previous case. We conclude that $g_{1}'=s_{1}$. Since $s_{0}s_{1}=s_{1}s_{2}$, we can
cancel out $s_{1}$ on both sides of (e.1) in order to obtain $%
s_{2}=g_{2}'\cdots g_{r}'$. Again, by Lemma \ref{left} we know that the
reduced expression of this product must be of the form $\mathrm{left}%
(g_{2}')t_{1}\cdots t_{k}$ and this must be equal to $\mathrm{left}%
(s_{2})=b^{2}$. This rules out the possibility of $g_{2}'$ being a long
conjugate or equal to $s_{1}$ or $s_{0}$. Thus, $g_{2}'=s_{2}$ and after applying a finite sequence of 
Hurwitz moves (e.1) can
be written in the form $s_{0}s_{1}=s_{1}s_{2}g_{3}''\cdots
g_{r}''$. As in the proof of
Proposition \ref{short}, one extra Hurwitz move allows us to write 
$s_{1}s_{2}$ as $s_{0}s_{1}$ and the claim follows.

In the second case, i.e. when all the $g_{i}'$'s are short we may assume that at 
least one pair of consecutive
elements does not join well. By Proposition \ref{short} after applying a finite sequence of 
Hurwitz moves one arrives at a product $s_0s_1=g_1''\ldots g_r''$ (e.2) with
$g_{1}''=s_{0}$. Canceling out $s_0$
on both sides of (e.2) we obtain $s_{1}=g_{2}''\cdots g_{r}''$. But
Case 1 (with $n=1$) implies that this product can be changed using Hurwitz
moves into a new one $g_2'''\ldots g_r'''$ with $g_{2}'''=s_{1}$. The claim follows.

The strategy of the proof for the remaining cases will be the same. In order
to make it shorter, we will abbreviate by Case $\ast$i, the case where all pairs
of consecutive elements in a product join well, and by Case $\ast$ii, the case where all elements 
in a product are short and at least one pair of consecutive elements does not join
well. We will also abbreviate the expression \emph{after a finite number of
Hurwitz moves} by \emph{after H.m.}.

Case 3: $m=vuv$, hence $\rho (m)=s_{0}s_{1}s_{0}$. Suppose that 
$s_{0}s_{1}s_{0}=g_{1}\cdots g_{r}$. After H.m. we arrive at the product $s_0s_1s_0=g_1'\ldots g_r'$ (e.3)
falling in one of the following cases.

Case 3i: Since $s_{0}s_{1}s_{0}=bwb^{2}$, the comparison of the
left sides of the reduced expressions of both sides of (e.3) implies that $g_{1}'=s_1$. Since
$s_{0}s_{1}s_{0}=s_{1}s_{2}s_{0}$ we have $s_{1}s_{2}s_{0}=g_1'\ldots g_r'$ and we can cancel out $s_{1}$ in this equation to obtain 
$s_{2}s_{0}=g_{2}'\cdots g_{r}'$. Now, $s_{2}s_{0}=s_{0}s_{1}$ implies that 
$s_{0}s_{1}=g_{2}'\cdots g_{r}'$ and we are in the previous case. We know that after H.m. the product $g_{2}'\cdots g_{r}'$
can be changed to a new one of the form 
$s_{0}s_{1}g_{4}''\cdots g_{r}''$ with $g_{4}''\ldots
g_{r}''=\pi(Id_{2\times 2})$, and consequently 
$s_{0}s_{1}s_{0}=s_{1}s_{0}s_{1}g_{4}''\cdots g_{r}''$. After H.m. the right hand side can be transformed into 
$s_{1}s_{2}s_{0}g_{4}''\cdots g_{r}''$ and this into 
$s_{0}s_{1}s_{0}g_{4}''\cdots g_{r}''$. The claim follows.

Case 3ii: Proposition \ref{short} allows us to assume that
$g_{r}'=s_0$ and if we cancel it out in (e.3) we obtain 
$s_{0}s_{1}=g_{1}'\cdots g_{r-1}'$ and ending up again in the previous case. After
H.m. the product $g_{1}'\cdots g_{r-1}'$ transforms into a new product of the form $s_0s_1g_3''\ldots g_{r-1}''$ 
with $g_{3}''\cdots$ $g_{r-1}''=\pi(Id_{2\times 2})$. Thus $s_{0}s_{1}s_{0}=s_{0}s_{1}g_{3}''\cdots
g_{r-1}''s_{0}$ which after H.m. becomes $s_{0}s_{1}(gs_{0}g^{-1})g_{3}''\cdots g_{r-1}''$ where $g=g_{3}''\cdots g_{r-1}=\pi(Id_{2\times 2})$.  The latter product is therefore $s_{0}s_{1}s_{0}g_{3}''\cdots g_{r-1}''$ and the claim follows.

Case 4: $m=(vu)^{2}$ hence $\rho (m)=(s_{0}s_{1})^{2}.$ Suppose 
$(s_{0}s_{1})^{2}=g_{1}\ldots g_{r}$. After H.m. we arrive at the product $s_0s_1s_0=g_1'\ldots g_r'$ (e.4)
falling in one of the following cases.

Case 4i: Since $(s_{0}s_{1})^{2}=b^{2}$, the comparison of the left
sides of the reduced expressions of both sides of (e.4) implies that $g_{1}'=s_2$. Since $(s_{0}s_{1})^{2}=s_{2}s_{0}s_{0}s_{1}$ we have that $s_{2}s_{0}s_{0}s_{1}=g_1'\ldots g_r'$.
Cancelling out $s_2$ on both sides of this equation gives 
$s_{0}s_{0}s_{1}=g_{2}'\ldots g_{r}'$ (e.5). Now $s_{0}s_{0}s_{1}=w$ and therefore $\text{left}(g_2')=w$.   
This implies that $g_{2}'=s_{0}$ and by cancelling this term out in equation (e.5) we obtain
$s_{0}s_{1}=g_{3}'\ldots g_{r}'$. Since this is Case 2, we know that after H.m. the product $g_{3}'\ldots g_{r}'$ transforms
into a product of the form $s_0s_1g_5''\ldots g_r''$ and we have 
$(s_{0}s_{1})^{2}=s_{2}s_{0}s_{0}s_{1}g_{5}''\ldots g_{r}''$ with $g_{5}''\ldots g_{r}''=\pi(Id_{2 \times 2})$. 
But $s_{2}s_{0}s_{0}s_{1}$ can be changed
with one extra Hurwitz move into $s_{0}s_{1}s_{0}s_{1}$ and the claim
follows.

Case 4ii: in this case, by Proposition \ref{short}, after H.m. the product $g_1'\ldots g_r'$ transforms into
a product product $(s_0s_1)^2=g_1''\ldots g_r''$ (e.6) with $g_r''=s_1$. Cancelling this element in equation
(e.6) gives $s_{0}s_{1}s_{0}=g_{1}''\cdots g_{r-1}''$. Since this is Case 3, we know that after H.m. the product
$g_{1}''\cdots g_{r-1}''$ transforms into a product of the form $s_0s_1s_0g_4'''\ldots g_{r-1}'''$ with $g_4'''\ldots g_{r-1}'''=\pi(Id_{2 \times 2})$. We have therefore that $s_0s_1s_0s_1=s_0s_1s_0g_4'''\ldots g_{r-1}'''s_1$, and after
H.m. this product transform into the product $s_0s_1s_0(gs_1g^{-1})g_4'''\ldots g_{r-1}'''$ with $g=g_4'''\ldots g_{r-1}'''=\pi(Id_{2 \times 2})$. The claim follows.

\end{proof}

\section{Confluence of singular fibers in elliptic fibrations}

In this section we apply Proposition \ref{proposition} and Theorem \ref{Main} 
to the question of giving necessary and sufficient conditions under which the 
set of singular fibers in an elliptic fibration can be fused 
into a unique singular fiber.
 
\begin{theorem}
\label{coalescencia2}Let $\phi:S\rightarrow D$ be a relatively minimal singular
elliptic fibration 
without multiple fibers. 
Then $\phi$ is weakly deformation equivalent to the elliptic fibration 
$\phi_T:S_T\rightarrow
D,$ if and only if the total monodromy 
$\lambda_{r,q_0,j}([C_{r}])$ of $\phi$ is 
conjugate with the matrix $M_T$ in Table \ref{primeratabla}
and $\chi(S)$ equals $\chi(S_T)$.
\end{theorem}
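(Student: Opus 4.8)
The plan is to prove the two implications separately, using Proposition \ref{proposition} to translate topological (weak) deformation equivalence into the combinatorics of Hurwitz moves, and Theorem \ref{Main} (or rather its strengthening Theorem \ref{fuerte}) to control those combinatorics.

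For the ``only if'' direction: suppose $\phi$ is weakly deformation equivalent to $\phi_T$. By the definition of weak deformation equivalence, there is a finite chain of families linking $\phi$ to $\phi_T$ through topological equivalences. By Remark \ref{nota1} the Euler characteristic of the total space is constant in each family, and topological equivalence preserves it too, so $\chi(S)=\chi(S_T)$. Similarly, by Remark \ref{nota2} the conjugacy class of the total monodromy is invariant under passing to another member of a family, and it is obviously invariant under topological equivalence (this also follows from the equivalence $1\Leftrightarrow 3$ in Proposition \ref{proposition}, since total monodromy is $[C_r]$, which is preserved). Hence the conjugacy class of $\lambda_{r,q_0,j}([C_r])$ equals that of the total monodromy of $\phi_T$, which is the conjugacy class of $M_T$. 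This direction is essentially bookkeeping with the cited invariance remarks.

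For the ``if'' direction, which is the substance: assume the total monodromy of $\phi$ is conjugate to $M_T$ and $\chi(S)=\chi(S_T)$. First apply Theorem \ref{morsificacion} to morsify $\phi$, obtaining a deformation whose generic member $\phi'$ has only singular fibers of simple Lefschetz type (and no multiple fibers, since $\phi$ has none); each such fiber is of type $I_1$, with monodromy a conjugate of $U$, and $\chi$ adds up fiberwise, so $\phi'$ has exactly $r=\chi(S)$ singular fibers. Pick a special basis; then the total monodromy of $\phi'$ is a product $G_1\cdots G_r$ of $r$ conjugates of $U$, whose product is conjugate to $M_T$; conjugating the whole picture by a fixed matrix we may assume it equals $M_T$. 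Since $\chi(S)=\chi(S_T)$ and $r=\chi(S)$, the number $r$ matches what Theorem \ref{fuerte} predicts for the length of a morsification of $\phi_T$: namely $r=n$ (the m.n.f. length) plus $12s$ in the $wI_n$--$IV$ cases, or $n+6+12s$ in the starred cases, because $n$ and $6$ are exactly accounted for by $\chi(M_T)$ in Table \ref{primeratabla}. Now invoke Theorem \ref{fuerte}: after a finite sequence of Hurwitz moves the product $G_1\cdots G_r$ becomes the ``standard'' product $C_1\cdots C_n(VU)^{6s}$ (resp.\ $C_1\cdots C_n(VU)^{6s+3}$), which is precisely the total monodromy of a fixed morsification of $\phi_T$ read off a special basis. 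By the equivalence $4\Leftrightarrow 1$ (or $5\Leftrightarrow 1$) in Proposition \ref{proposition}, the existence of such a sequence of Hurwitz moves together with a conjugation means $\phi'$ is topologically equivalent to this chosen morsification $\phi_T'$ of $\phi_T$. Chaining: $\phi$ is weakly deformation equivalent to $\phi'$ (one family, the morsification), $\phi'\sim\phi_T'$, and $\phi_T'$ is weakly deformation equivalent to $\phi_T$ (the same morsifying family run in reverse). Since weak deformation equivalence is transitive by its very definition, $\phi$ is weakly deformation equivalent to $\phi_T$.

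The main obstacle is the ``if'' direction, and within it the step of matching the combinatorial conclusion of Theorem \ref{fuerte} with a genuine elliptic fibration. One must be careful that the standard product $C_1\cdots C_n(VU)^{6s}$ really is realized as the monodromy sequence, in a special basis, of some relatively minimal elliptic fibration over $D$ without multiple fibers having only simple Lefschetz fibers; this is where part (3) of Theorem \ref{Kodaira} (existence of $\phi_T$) plus Theorem \ref{morsificacion} (existence of a morsification of $\phi_T$ with no multiple fibers) is used, and one should check the morsification of $\phi_T$ indeed has exactly $\chi(S_T)$ fibers of type $I_1$ so that the counts agree. A secondary point requiring care is the degenerate/small cases (e.g.\ $n=0$, or $s=0$, or $M_T$ itself already being in Kodaira's list with few singular fibers in $\phi$), where some of the products above are empty; these are handled by the convention, already adopted in the proof of Theorem \ref{Main}, that an empty product equals $Id_{2\times2}$. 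Everything else is an application of the already-established Proposition \ref{proposition}.
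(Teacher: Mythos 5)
Your proposal follows essentially the same route as the paper: necessity is the same bookkeeping with the invariance remarks, and sufficiency proceeds by morsifying, counting the $I_1$ fibers via the Euler characteristic, normalizing the resulting factorization of (a conjugate of) $M_T$ into $\chi(S_T)$ conjugates of $U$ by Theorem \ref{Main}/\ref{fuerte}, and invoking Proposition \ref{proposition} to obtain a topological equivalence that closes the chain of weak deformations. The only point where the paper is tighter is the one you yourself flag: rather than asserting that a morsification of $\phi_T$ realizes the standard product $C_1\cdots C_n(VU)^{6s}$ on the nose in some special basis, the paper morsifies \emph{both} $\phi$ and $\phi_T$ and applies the normalization theorem to each of the two length-$\chi(S_T)$ factorizations, so the two generic members are compared to each other through the common normal form (up to Hurwitz moves and a single overall conjugation), which is exactly the hypothesis of item 4 of Proposition \ref{proposition}.
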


\begin{proof}
Necessity. Let us assume that $\phi:S\rightarrow D$ is weakly deformation equivalent to a 
$\phi_T:S_T\rightarrow D$ as described in the statement. Remark \ref{nota1} implies
that the total monodromies $\lambda_{r,q_0,j}([C_r])$ of $\phi$, and $\lambda_{r',q_0',j'}([C_{r'}])$ of $\phi_T$
are conjugate of each other, but the latter is conjugate with $M_T$. Remark \ref{nota2} implies that 
$\chi(S)=\chi(S_T)$.

Sufficiency.
By Theorem \ref{morsificacion} there exist morsifications $(\Phi,\mathcal{S},D\times D_{\epsilon})$ and $(\Psi,\mathcal{T},D\times D_{\delta})$ of $\phi:S\rightarrow D$ and $\phi_T:S_T\rightarrow D$, respectively. According
to that theorem we may assume that none of the members of either morsification contains a multiple fiber.
Let us fix $t\neq 0$ in $D_{\epsilon}$ and $t'\neq 0$ in $D_{\delta}$ and let us consider the elliptic 
fibrations $(\Phi_{t},\mathcal{S}_{t},D)$ and $(\Psi_{t'},\mathcal{T}_{t'},D)$. We claim that these 
elliptic fibrations are topologically equivalent. Let us denote by $q_1,\ldots q_k$ (resp. $q_1',\ldots,q_l'$) the
critical values of $\Phi_{t}$ (resp. $\Psi_{t'}$).
We begin to prove the claim by pointing out that $k$ must be equal to $l$, since
 $k=\sum_{i=1}^{k}\chi(\Phi_t^{-1}(q_i))=\chi(\mathcal{S}_t
)=\chi(S)=\chi(S_T)=\chi(\mathcal{T}_{t'})=\sum_{i=1}^{l}\chi(\Psi_t^{-1}(q_i'))=l$ where the first and last 
equalities are justified by the fact that all singular fibers of $\Phi_{t}$ and $\Psi_{t'}$ are of type $I_1$ and therefore each one of these fibers has Euler characteristic $1$, and the third and fifth equalities are justified by
Remark \ref{nota1}. Since $\chi(S_T)$ always equals the number of factors $n_T$ in the m.n.f. of $M_T$, we conclude that $k=n_T$.
Let $\lambda=\lambda_{r,q_0,j}$ and $\lambda'=\lambda_{r',q_0',j'}$ be monodromy representations
of $\Phi_t$ and $\Psi_{t'}$, respectively, and let $\gamma_1,\ldots,\gamma_k$ and $\gamma_1',\ldots,\gamma_k'$ be special bases for $\pi_1(D-\{q_1,\ldots,q_{n_T}\},q_0)$ and $\pi_1(D-\{q_1',\ldots,q_{n_T}'\},q_0')$, respectively. Now, 
the total mondromy $\lambda([C_r])$ of $\Phi_t$ is conjugate of a total monodromy of $\phi$ and the total monodromy
$\lambda'([C_{r'}])$ of $\Psi_{t'}$ is conjugate of a total monodromy of $\phi_T$, we have that 
$\lambda([C_r])$ and $\lambda'([C_{r'}])$ are conjugates of each other, say $\lambda([C_r])=A^{-1}\lambda'([C_{r'}])A$ for some $A \in SL(2,\mathbb{Z})$. On the other hand, 
$\lambda([C_r])=\lambda(\gamma_1)\ldots \lambda(\gamma_{n_T})$ and $\lambda'([C_{r'}])=\lambda'(\gamma_1')\ldots \lambda'(\gamma_{n_T}')$
are factorizations of the corresponding total monodromies in terms of conjugates of $U$. It is clear that there exist $B,C\in SL(2,\mathbb{Z})$
such that $M_T=(B^{-1}\lambda(\gamma_1)B)\ldots (B^{-1}\lambda(\gamma_{n_T})B)=(C^{-1}\lambda'(\gamma_1')C)\ldots (C^{-1}\lambda'(\gamma_{n_T}')C)$.  By Theorem \ref{Main} the product $(B^{-1}\lambda(\gamma_1)B)\ldots (B^{-1}\lambda(\gamma_{n_T})B)$ can be transformed into the product $(C^{-1}\lambda'(\gamma_1')C)\ldots (C^{-1}\lambda'(\gamma_{n_T}')C)$ by applying performing a finite number of Hurwitz moves. By using the immediate fact
that if in a group a product $g_1\ldots g_r$ can be transformed by applying Hurwitz moves to another product
$h_1\ldots h_r$, then for any element $g\in G$, the product $(g^{-1}g_1g)\ldots(g^{-1}g_rg)$ can be transformed
into the product $(g^{-1}h_1g)\ldots(g^{-1}h_rg)$ by applying Hurwitz moves, we obtain that $\lambda(\gamma_1)\ldots \lambda(\gamma_{n_T})$ can be transformed into the product $(BC^{-1}\lambda'(\gamma_1')(BC^{-1})^{-1})\ldots (BC^{-1}\lambda'(\gamma_{n_T}')(BC^{-1})^{-1}))$. We conclude that the product $\lambda(\gamma_1)\ldots \lambda(\gamma_{n_T})$ transforms into the product $\lambda'(\gamma_1')\ldots \lambda'(\gamma_{n_T}')$ by a finite sequence of Hurwitz moves followed by a conjugation of all factors in the product by the same element is $SL(2,\mathbb{Z}$.  But Proposition \ref{proposition} implies that under these circumstances, the elliptic fibrations
$\Phi_t$ and $\Psi_{t'}$ are topologically equivalent. In conclusion, $(\Phi,\mathcal{S},D\times D_{\epsilon})$ and
$(\Psi,\mathcal{T},D\times D_{\delta})$ are deformations of $(\phi,S,D)$ and $(\phi_M,S_M,D)$, respectively, such that
$(\Phi_t,\mathcal{S}_t,D)$ and $(\Psi_{t'},\mathcal{T}_{t'},D)$ are topologically equivalent, and therefore $(\phi,S,D)$ and $(\phi_M,S_M,D)$ are weakly deformation equivalent. 
\end{proof}


\begin{thebibliography}{99}


\bibitem{Friedman-Morgan} Friedman R., Morgan, J. W. \emph{Smooth
four-manifolds and complex surfaces}. Ergebnise der Mathematik und ihrer
Grnezgebiete 3. Folge $\cdot$ Band 27. Springer-Verlag, 1994.


\bibitem{KhovanskiiZdravkovska} Khovanskii A.G., Zdravkovska S., \textit{%
Branched covers of }$S^{2}$\textit{\ and braid groups}, J. Knot Theory
Ramifications 5 (1995), 55-75.

\bibitem{Kodaira} Kodaira K., \emph{On Compact Analytic Surfaces: II}. The
Annals of Mathematics, 2nd Ser., Vol. 77, No. 3 (May 1963), 563-626.

\bibitem{Magnus} W. Magnus, A. Karras, D. Solitar. Combinatorial Group Theory. Second Revised Edition, Dover Publications, New York, 1976.

\bibitem{Matsumoto} Matsumoto Y. \emph{Diffeomorphism types of ellitic
surfaces}. Topology, 25 (1986), No. 4, 549-563.

\bibitem{mat} Matsumoto Y., \emph{Lefshetz fibration of genus 2, a
topological approach}, Proc. of the 37th Taniguchi Symp. on Topology and
Teichmuller Spaces, (S Kojima et al, ed.) Worl Scientific 1996, 123-148.

\bibitem{Moishezon} Moishezon B., \emph{Complex surfaces and connected sums
of complex projective planes}. Lecture Notes in Mathematics 603,
Springer-Verlag, 1977.

\bibitem{Naruki} Naruki I., \emph{On Confluence of Singular fiber in
Elliptic Fibration}, Publ. RIMS, Kyoto University 23, 1987, 409-431.

\bibitem{Natanzon} Natanzon S.M., \textit{Topology of 2-dimensional
Coverings and Meromorphic Functions on Real and Complex Algebraic Curves},
Selecta Mathematica (antes \emph{Sovietica}), Vol. 12, No. 3 (1993), 251-291.


\bibitem{Takamura} S. Takamura. \emph{Towards the classification of atoms of degenerations, I-Splitting criteria via configurations of singular fibers}. J. Math. Soc. Japan. Volume 56, Number 1 (2004), 115-145. 


\bibitem{VandeVen} W. Barth, K. Hulek, C. Peters, A. Van de Ven. \emph{%
Compact Complex Surfaces}. Second Edition. Springer-Verlag. 2004.




\end{thebibliography}
\end{document}